\newtheorem{theorem}{Theorem}[section]
\newtheorem{corollary}[theorem]{Corollary}
\newtheorem{lemma}[theorem]{Lemma}
\newtheorem{proposition}[theorem]{Proposition}
\newtheorem{observation}[theorem]{Observation}
\theoremstyle{definition}
\begin{document}
\title{On Total Domination and Minimum Maximal Matchings in Graphs}
\author{Selim Bahad{\i}r}
\affil{Department of Mathematics-Computer,\\
Ankara Y\i ld\i r\i m Beyaz\i t University, Turkey\\
sbahadir@ybu.edu.tr}
\date{\today}

\maketitle
\pagenumbering{roman} \setcounter{page}{1}

\pagenumbering{arabic} \setcounter{page}{1}

\begin{abstract}
\noindent
A subset $M$ of the edges of a graph $G$ is a matching if no two edges in $M$ are incident.
A maximal matching is a matching that is not contained in a larger matching.
A subset $S$ of vertices of a graph $G$ with no isolated vertices is a
total dominating set of $G$ if every vertex of $G$ is adjacent to at least one vertex in $S$.
Let $\mu^*(G)$ and $\gamma_t(G)$ be the minimum cardinalities of a maximal matching and a total dominating set in $G$, respectively.
Let $\delta(G)$ denote the minimum degree in graph $G$.
We observe that $\gamma_t(G)\leq 2\mu^*(G)$ when $1\leq \delta(G)\leq 2$ and $\gamma_t(G)\leq 2\mu^*(G)-\delta(G)+2$ when $\delta(G)\geq 3$.
We show that the upper bound for the total domination number is tight for every fixed $\delta(G)$.
We provide a constructive characterization of graphs $G$ satisfying $\gamma_t(G)= 2\mu^*(G)$ and a polynomial time procedure to determine whether $\gamma_t(G) = 2\mu^*(G)$ for a graph $G$ with minimum degree two.
\end{abstract}

\noindent
{\it Keywords: minimum maximal matching, total domination number} \\

\section{Introduction}
\label{sec:intro}
Let $G$ be a graph with vertex set $V(G)$ and edge set $E(G)$.
The neighborhood of a vertex $v\in V(G)$, denoted by $N(v)$, is the set of vertices adjacent to $v$.
The \emph{degree} of a vertex $v$ is the cardinality of $N(v)$ and denoted by $d(v)$.
The minimum degree of the graph $G$ is denoted by $\delta(G)$.
Throughout this paper, we only
consider simple, finite and undirected graphs without isolated vertices.

A set $S\subseteq V(G)$ of vertices is called a \emph{dominating set} of $G$ if every vertex of $V(G)\backslash S$ is adjacent to a member of $S$.
The \emph{domination number} $\gamma(G)$ is the minimum cardinality of a dominating set of $G$.
If $G$ has no isolated vertices, a subset $S\subseteq V(G)$ is called a \emph{total dominating set} of $G$ if every vertex of $V(G)$ is adjacent to a member of $S$.
In other words, $S$ is a total dominating set if $S$ is a dominating set and the subgraph of $G$ induced by $S$ has no isolated vertices.
The \emph{total domination number} of $G$ with no isolated vertices, denoted by $\gamma_t(G)$, is the minimum size of a total dominating set of $G$.

Two edges in $G$ are \emph{independent} if they share no vertex.
A set $M\subseteq E(G)$ consisting of pairwise independent edges is called a \emph{matching} in $G$.
A \emph{maximal matching} of $G$ is a matching that is not contained in a
larger matching in $G$.
The \emph{matching number} of $G$ is the maximum cardinality of a matching in $G$ and denoted by $\mu(G)$ (also $\alpha'(G)$ and $\nu(G)$).
Let $\mu^*(G)$ denote the minimum cardinality of a maximal matching in $G$.
A set $D\subseteq E(G)$ is called \emph{edge dominating set} if every edge not in $D$ is adjacent to at least one edge in $D$ and minimum cardinality of an edge dominating set is denoted by $\gamma'(G)$.
Any maximal matching is always an edge dominating set.
Furthermore, the size of a minimum edge dominating set equals the size of a minimum maximal matching, i.e., $\mu^*(G)=\gamma'(G)$ for every graph $G$ (see, \cite{yannakakis1980edge}).

Obtaining bounds on total domination number in terms of other graph parameters and classifying graphs whose total domination number attains an upper or lower bound are studied by many authors (see, Chapter 2 in \cite{yeo:2013}).
For example, \citeauthor{cockayne:1980} \cite{cockayne:1980} showed that if $G$ is a connected graph with order at least 3, then $\gamma_t(G)\leq 2|V(G)|/3$.
Moreover, \citeauthor{brigham:2000} \cite{brigham:2000} provided that a connected graph $G$ satisfies
$\gamma_t(G)= 2|V(G)|/3$ if and only if $G$ is a cycle of length 3 or 6, or $H\circ P_2$ for some connected graph $H$,
where $P_2$ is a path of length 2 and $H\circ P_2$ is obtained by identifying each vertex of $H$ by an end vertex of a copy of $P_2$.

It is well-known that $\gamma(G) \leq \gamma_t(G) \leq 2\gamma(G)$.
An open question in \cite{henning:2009} is to characterize the graphs $G$ with $\gamma_t(G)=2\gamma(G)$.
As partial answers of this problem, \citeauthor{henning:2001} \cite{henning:2001} provided a constructive characterization of trees,
\citeauthor{hou:2010} \cite{hou:2010} generalized it to block graphs and \citeauthor{sbdg:2018} \cite{sbdg:2018} presented a characterization of a large family of graphs (including chordal graphs) satisfying $\gamma_t(G)= 2\gamma(G)$.

For every graph $G$ with no isolated vertex it is true that $\gamma(G)\leq \mu(G)$.
However, the inequality $\gamma_t(G)\leq \mu(G)$ does not always hold.
\citeauthor{henning2008matching} \cite{henning2008matching} prove that $\gamma_t(G)\leq \mu(G)$ is valid for every claw-free graph $G$ with $\delta(G)\geq 3$ and
every $k$-regular graph $G$ with $k\geq 3$.
\citeauthor{henning2013total} \cite{henning2013total} show that
if all vertices in a connected graph $G$ with at least four vertices belong to a triangle, then $\gamma_t(G)\leq \mu(G)$.
Claw-free graphs with minimum degree three that have equal total domination and matching numbers are determined in \cite{henning2006total}, whereas every tree $T$ satisfying $\gamma_t(T)\leq \mu(T)$ is characterized in \cite{shiu2010some}.

Unlike the inequality $\gamma_t(G)\leq \mu(G)$,
the inequality $\gamma_t(G)\leq 2\mu^*(G)$ holds for every graph $G$ with no isolated vertex since the vertex set of a maximal matching is a total dominating set.
However, the inequality is tight only when the minimum degree is one or two.
We observe that if $\delta(G)\geq 3$, then $\gamma_t(G)\leq 2\mu(G)^*-\delta(G)+2$ and the equality is attained by infinitely many graphs for any given minimum degree.
In this paper,
we mainly study graphs satisfying the upper bound for total domination number,
$\gamma_t(G)= 2\mu^*(G)$, and refer to them
as $(\gamma_t,2\mu^*)$-graphs.
We characterize all $(\gamma_t,2\mu^*)$-graphs and present a process with polynomial time complexity to determine whether a given graph $G$ with $\delta(G)=2$ is a $(\gamma_t,2\mu^*)$-graph.

The rest of this paper is organized as follows:
Section \ref{sec:mainres} provides the main results and
the proofs of the main theorems are given in Section \ref{sec:proof}.
Discussion and conclusions are provided in Section \ref{sec:dis}.

\section{Main Results}
\label{sec:mainres}
\subsection{An Upper Bound for the Total Domination Number}
We first provide some definitions and notations required for the statement of the main results.
An edge joining vertices $u$ and $v$ is denoted as $uv$.
For a matching $M=\{u_1v_1,\dots, u_nv_n\}$ in a graph,
let $V(M)=\{u_1,v_1,\dots,u_n,v_n\}$
and $M_p(w)$ be the neighbor of $w$ in $M$ for every vertex $w$ in $V(M)$, i.e., $M_p(u_i)=v_i$ and $M_p(v_i) =u_i$ for $i=1,\dots,n$.
Notice that $M_p(M_p(w))=w$ for every $w\in V(M)$.
For a subset $A$ of $V(M)$ let $M_p(A)=\{M_p(v):v\in A\}$.
A set of vertices is called \emph{independent} if there is no edge joining two of the vertices in the set.
\begin{observation}\label{obs:maxmathind}
For every graph $G$ and every maximal matching $M$ in $G$, $V(G)\backslash V(M)$ is an independent set.
\end{observation}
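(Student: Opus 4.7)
The plan is a one-line proof by contradiction, exploiting exactly the definition of maximality of $M$. I would suppose for contradiction that $V(G)\setminus V(M)$ is not independent, so there exist two distinct vertices $u,v \in V(G)\setminus V(M)$ with $uv \in E(G)$.

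Next I would observe that since neither $u$ nor $v$ lies in $V(M)$, the edge $uv$ shares no endpoint with any edge already in $M$. Therefore $M \cup \{uv\}$ is still a matching in $G$, and it strictly contains $M$. This directly contradicts the assumption that $M$ is a maximal matching, completing the argument.

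There is essentially no obstacle here: the proof is a direct unpacking of the definition of a maximal matching (no edge can be added while preserving the matching property). The only thing to be careful about is the trivial verification that $u \ne v$, which is automatic because $uv$ is an edge of the simple graph $G$, so its two endpoints are distinct.
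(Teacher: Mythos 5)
Your proof is correct and is exactly the standard argument the paper implicitly relies on (the paper states this as an observation without proof): an edge between two vertices missed by $M$ could be added to $M$, contradicting maximality. Nothing further is needed.
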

\begin{observation}\label{obs:maxmatchtds}
Let $G$ be a graph with no isolated vertices and $M$ be a maximal matching of $G$.
Then, $V(M)$ is a total dominating set in $G$.
\end{observation}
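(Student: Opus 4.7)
The plan is to verify directly the two requirements in the definition of a total dominating set for the vertex set $V(M)$: first, that $V(M)$ dominates $V(G)$, and second, that the subgraph of $G$ induced by $V(M)$ has no isolated vertex. Both requirements reduce to the single statement that every vertex of $G$ has a neighbor lying inside $V(M)$.

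For the second requirement, I would argue in one line: any $v\in V(M)$ is, by definition, incident to some edge $e\in M$, and its matching partner $M_p(v)$ is then both in $V(M)$ and adjacent to $v$. Thus $V(M)$ induces a subgraph with no isolated vertex. This uses no hypothesis beyond the definition of $V(M)$.

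For the first requirement, I would take an arbitrary $v\in V(G)\setminus V(M)$ and exhibit a neighbor of $v$ inside $V(M)$. Since $G$ has no isolated vertex by assumption, $v$ has at least one neighbor $u$ in $G$. The key step is the use of maximality of $M$: if $u$ were also in $V(G)\setminus V(M)$, then $M\cup\{uv\}$ would be a matching strictly larger than $M$, contradicting maximality. Hence every neighbor of $v$ lies in $V(M)$, and in particular $v$ is dominated by $V(M)$. Combining this with the previous paragraph completes the proof. There is no real obstacle: the argument is a direct unpacking of the definitions, and its only substantive ingredient is the standard observation, already recorded in Observation \ref{obs:maxmathind}, that $V(G)\setminus V(M)$ is independent whenever $M$ is a maximal matching.
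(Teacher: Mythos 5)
Your proof is correct and is exactly the argument the paper has in mind: the paper states this as an Observation without proof, and the intended justification is precisely your two-step check (matching partners handle vertices of $V(M)$, while Observation \ref{obs:maxmathind} together with the no-isolated-vertices hypothesis handles vertices outside $V(M)$). Nothing is missing.
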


\begin{proposition}\label{prop:gammatdeg3}
Let $G$ be a graph with no isolated vertices.
If $1\leq \delta(G) \leq 2$, then $\gamma_t(G)\leq 2\mu^*(G)$.
If $\delta(G)\geq 3$, then we have $\gamma_t(G)\leq 2\mu^*(G)-\delta(G)+2$.
\end{proposition}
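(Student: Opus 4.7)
The first inequality follows at once from Observation~\ref{obs:maxmatchtds}: for any minimum maximal matching $M$, $V(M)$ is a total dominating set of size $2\mu^*(G)$.

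For the second inequality, assume $\delta(G)\ge 3$, fix a minimum maximal matching $M$, and pick a vertex $v$ with $d(v)=\delta(G)$. The plan is to build a total dominating set $S$ of size $2\mu^*(G)-\delta(G)+2$ by stripping $\delta(G)-1$ vertices from $V(M)$ and adding back one vertex. Two cases arise, $v\notin V(M)$ and $v\in V(M)$. In the first (main) case, Observation~\ref{obs:maxmathind} forces $N(v)\subseteq V(M)$; writing $N(v)=\{u_1,\ldots,u_\delta\}$, I would start with the candidate
\[S_0=\{v,u_1\}\cup(V(M)\setminus N(v)),\]
which has exactly $2\mu^*(G)-\delta(G)+2$ vertices. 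The case $v\in V(M)$ is handled by a parallel construction: when $|N(v)\cap V(M)|\ge\delta(G)-1$, remove $\delta(G)-2$ matched neighbors of $v$ other than $M_p(v)$; otherwise $v$ has at least two non-matched neighbors, and one pivots on any $y\in N(v)\setminus V(M)$, which by the degree hypothesis satisfies $|N(y)|\ge\delta$ with $N(y)\subseteq V(M)$.

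Verifying that $S_0$ is a total dominating set is direct for most vertices: $v$ and $u_1$ dominate each other, and every $u_i$ with $i\ge 2$ is dominated by $v$. A counting step handles non-matched vertices: any such $y\ne v$ has $N(y)\subseteq V(M)$ with $|N(y)|\ge\delta$, and since $|V(M)\setminus S_0|=\delta-1$, at least one neighbor of $y$ remains in $S_0$. The only remaining concern is a \emph{bad} matched vertex $w\in V(M)\setminus N(v)$ whose $V(M)$-neighbors all lie in $N(v)\setminus\{u_1\}$, so that $w$ has no neighbor in $S_0$.

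To eliminate bad vertices I plan to iterate a size-preserving swap: while the current set $S$ contains a bad $w\in V(M)\setminus N(v)$, delete $w$ from $S$ and insert $M_p(w)\in N(v)\setminus\{u_1\}$. Each swap preserves the equality $|V(M)\setminus S|=\delta-1$, so the count-based domination of non-matched vertices goes through; the removed $w$ is now dominated by $M_p(w)\in S$, and $M_p(w)$ is dominated by $v\in S$. Since partners are distinct and each swap enlarges a subset of $N(v)\setminus\{u_1\}$, the procedure terminates in at most $\delta-1$ steps, and at termination no matched vertex in $S$ is bad. The $v\in V(M)$ branch is handled by an analogous swap argument on the corresponding construction. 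The technical heart of the proof, and what I expect to be the main obstacle, is verifying that the swap cannot leave behind a newly undominated vertex that is not itself of the bad type (so that iteration resolves it); the hypothesis $\delta(G)\ge 3$ is used both to make the counting bound for non-matched vertices tight and to guarantee that each bad $w$'s partner is available inside $N(v)\setminus\{u_1\}$ for the swap.
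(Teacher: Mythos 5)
Your first inequality and your counting step for unmatched vertices match the paper exactly, and your overall strategy for $\delta(G)\ge 3$ --- shrink $V(M)$ by $\delta-1$ vertices and add one vertex back --- is also the paper's strategy. The gap is in \emph{which} $\delta-1$ vertices you delete. You delete the neighbors $N(v)\setminus\{u_1\}$ of your pivot, which is what creates the ``bad'' matched vertices $w$ whose partner $M_p(w)$ has been removed, and you then leave the repair of those vertices as an acknowledged obstacle (``the technical heart of the proof\dots is verifying that the swap cannot leave behind a newly undominated vertex that is not itself of the bad type''). That verification is genuinely nontrivial as stated: after a swap, the set $V(M)\setminus S$ is no longer $N(v)\setminus\{u_1\}$ but also contains previously removed bad vertices, so your claim that each new bad $w$ has $M_p(w)\in N(v)\setminus\{u_1\}$ needs an induction (it does go through, because the partner of any previously removed bad vertex lies in $N(v)$, is therefore never bad, and is never removed again --- which also gives termination via the monotone growth of $S\cap(N(v)\setminus\{u_1\})$), and your $v\in V(M)$ branch is only gestured at. So as written the proposal is incomplete at exactly the point you flag.

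The paper avoids the entire iteration with one change of target: for a vertex $x\notin V(M)$ and a $(\delta-1)$-subset $A\subseteq N(x)$, it deletes $M_p(A)$ --- the \emph{matched partners} of the chosen neighbors --- rather than the neighbors themselves, taking $S=(V(M)\setminus M_p(A))\cup\{x\}$. Then every $v\in A$ is dominated by $x$, every $v\in V(M)\setminus A$ still has $M_p(v)\in S$, and unmatched vertices are handled by your same counting argument; no repair loop is needed. The only separate case is $V(G)=V(M)$, where deleting any $\delta-1$ vertices already gives a total dominating set of size $2\mu^*(G)-\delta+1$. I recommend replacing your swap procedure with this choice of deletion set; alternatively, if you keep your construction, you must supply the termination and invariance argument sketched above rather than asserting it.
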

\begin{proof}
Let $M$ be a minimum maximal matching in $G$ and $\delta=\delta(G)$.
By Observation \ref{obs:maxmatchtds} we get $\gamma_t \leq |V(M)|=2\mu^*(G)$ and hence, we obtain the inequality for $\delta \in \{1,2\}$.

When the minimum degree is at least three we can obtain a better inequality for the total domination number.
Suppose that $V(G)\backslash V(M)$ is not empty and let $x$ be vertex in $V(G)\backslash V(M)$.
Recall that $N(x)\subseteq V(M)$.
Let $A$ be a subset of $N(x)$ of size $\delta(G)-1$.
Consider the set $S=(V(M)\backslash M_p(A))\cup \{x\}$.
which is obtained by removing $\delta-1$ vertices from $V(M)$ and adding one vertex not in $V(M)$.
We claim that $S$ is a total dominating set of $G$.
For every $y$ not in $V(M)$ we have $N(y)\subseteq V(M)$ and $|N(y)|\geq \delta$.
Thus, $N(y)\cap S\neq \emptyset$.
Now let $v$ be a vertex in $V(M)$.
If $v\in A$, then $v$ is adjacent to $x\in S$.
If $v\notin A$, then $M_p(v)\in S$ and hence $M_p(v)\in N(v)\cap S$.
Consequently, we see that $S$ is a total dominating set and $\gamma_t \leq |S|=|M|-(\delta -1)+1=2\mu^*(G)-\delta +2$.

Next, assume that $V(G)\backslash V(M)$ is empty.
In this case, we see that $V(M)$ is the set of whole vertices in $G$.
As the minimum degree is $\delta$, any set obtained by removing $\delta -1$ vertices from $V(G)$ is a total dominating set and hence,
we obtain $\gamma_t(G)\leq |V(G)|-(\delta -1)=|V(M)|-\delta +1=2\mu^*(G)-\delta +1< 2\mu^*(G)-\delta +2$.
\end{proof}

\begin{figure}
\centering
\begin{tikzpicture}[line cap=round,line join=round,>=triangle 45,x=1.5cm,y=1.5cm]
\clip(0.5,2) rectangle (11.5,4.2);
\draw [line width=1pt] (2.5,3.3)-- (2,3.8);
\draw [line width=1pt] (2,3.8)-- (1.4,3.8);
\draw [line width=1pt] (1.4,3.8)-- (1,3.7);
\draw [line width=1pt] (2.5,3.3)-- (2.6,3.8);
\draw [line width=1pt] (2.6,3.8)-- (3,4);
\draw [line width=1pt] (3,4)-- (3.8,4);
\draw [line width=1pt] (2.5,3.3)-- (3.2,3.4);
\draw [line width=1pt] (3.2,3.4)-- (3.7,3.6);
\draw [line width=1pt] (3.7,3.6)-- (4.4,3.4);
\draw [line width=1pt] (2.5,3.3)-- (1.8,3.3);
\draw [line width=1pt] (1.8,3.3)-- (1.2,3.2);
\draw [line width=1pt] (1.2,3.2)-- (0.8,3.1);
\draw [line width=1pt] (6,4)-- (6,3);
\draw [line width=1pt] (6,3)-- (7,3);
\draw [line width=1pt] (7,3)-- (7,4);
\draw [line width=1pt] (7,4)-- (6,4);
\draw [line width=1pt] (7,4)-- (8,4);
\draw [line width=1pt] (8,4)-- (8,3);
\draw [line width=1pt] (8,3)-- (7,3);
\draw [line width=1pt] (8,4)-- (9,4);
\draw [line width=1pt] (9,4)-- (9,3);
\draw [line width=1pt] (9,3)-- (8,3);
\draw [line width=1pt] (10,4)-- (10,3);
\draw [line width=1pt] (10,3)-- (11,3);
\draw [line width=1pt] (11,3)-- (11,4);
\draw [line width=1pt] (11,4)-- (10,4);
\draw [line width=1pt,dotted] (9.22,3.48)-- (9.82,3.48);
\draw [line width=1pt] (9,4)-- (9.3,4);
\draw [line width=1pt] (9,3)-- (9.3,3);
\draw [line width=1pt] (10,3)-- (9.7,3);
\draw [line width=1pt] (10,4)-- (9.7,4);
\draw [shift={(2.6,3.5)},line width=1pt,dotted]  plot[domain=3.74:5.82,variable=\t]({1*0.8807979980267823*cos(\t r)+0*0.8807979980267823*sin(\t r)},{0*0.8807979980267823*cos(\t r)+1*0.8807979980267823*sin(\t r)});
\draw (2.3,2.5) node[anchor=north west] {$G_1$};
\draw (8.2,2.5) node[anchor=north west] {$G_2$};
\begin{scriptsize}
\draw [fill=black] (2.5,3.3) circle (2.5pt);
\draw [fill=black] (2,3.8) circle (2.5pt);
\draw [fill=black] (1.4,3.8) circle (2.5pt);
\draw [fill=black] (1,3.7) circle (2.5pt);
\draw [fill=black] (2.6,3.8) circle (2.5pt);
\draw [fill=black] (3,4) circle (2.5pt);
\draw [fill=black] (3.8,4) circle (2.5pt);
\draw [fill=black] (3.2,3.4) circle (2.5pt);
\draw [fill=black] (3.7,3.6) circle (2.5pt);
\draw [fill=black] (4.4,3.4) circle (2.5pt);
\draw [fill=black] (1.8,3.3) circle (2.5pt);
\draw [fill=black] (1.3,3.2) circle (2.5pt);
\draw [fill=black] (0.8,3.1) circle (2.5pt);
\draw [fill=black] (6,4) circle (2.5pt);
\draw [fill=black] (6,3) circle (2.5pt);
\draw [fill=black] (7,3) circle (2.5pt);
\draw [fill=black] (7,4) circle (2.5pt);
\draw [fill=black] (8,4) circle (2.5pt);
\draw [fill=black] (8,3) circle (2.5pt);
\draw [fill=black] (9,4) circle (2.5pt);
\draw [fill=black] (9,3) circle (2.5pt);
\draw [fill=black] (10,4) circle (2.5pt);
\draw [fill=black] (10,3) circle (2.5pt);
\draw [fill=black] (11,3) circle (2.5pt);
\draw [fill=black] (11,4) circle (2.5pt);
\draw [fill=black] (6.5,4) circle (2.5pt);
\draw [fill=black] (6.5,3) circle (2.5pt);
\draw [fill=black] (7.5,3) circle (2.5pt);
\draw [fill=black] (7.5,4) circle (2.5pt);
\draw [fill=black] (8.5,4) circle (2.5pt);
\draw [fill=black] (8.5,3) circle (2.5pt);
\draw [fill=black] (10.5,4) circle (2.5pt);
\draw [fill=black] (10.5,3) circle (2.5pt);
\end{scriptsize}
\end{tikzpicture}
\caption{Examples of $(\gamma_t,2\mu^*)$-graphs.
On the left, graph $G_1$ consists of $n$ paths of length three sharing a common end vertex.
Note that $\gamma_t(G_1)=2n$, $\mu^*(G_1)=n$ and $\delta(G_1)=1$.
On the right, graph $G_2$ is obtained by subdividing every horizontal edge of a $1\times n$ grid graph.
Notice that $\gamma_t(G_2)=2n+2$, $\mu^*(G_2)=n+1$ and $\delta(G_2)=2$.
}\label{fig:exgammat2mu}
\end{figure}
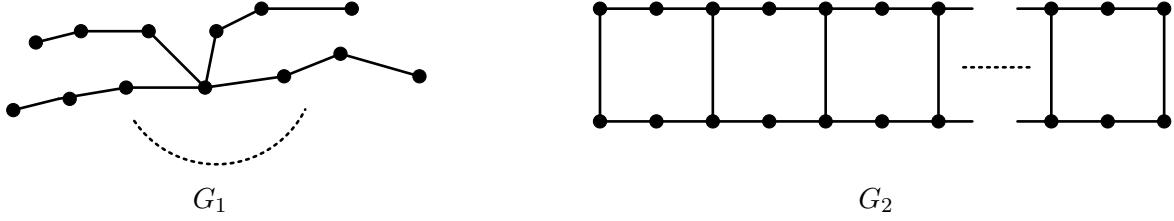
Proposition \ref{prop:gammatdeg3} implies that every  $(\gamma_t,2\mu^*)$-graph has minimum degree one or two.
Examples of $(\gamma_t,2\mu^*)$-graphs are illustrated in Figure \ref{fig:exgammat2mu}.
The following result shows that the inequality in Proposition \ref{prop:gammatdeg3} is tight when the minimum degree is at least three.

\begin{proposition}
For every positive integer $\delta \geq 3$, there exist infinitely many graphs $G$ with $\gamma_t(G)=2\mu^*(G)-\delta +2$ and $\delta(G)=\delta$.
\end{proposition}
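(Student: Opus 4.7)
The plan is to exhibit, for each fixed $\delta \geq 3$, an infinite family $\{G_n\}_{n \geq 1}$ of graphs with $\delta(G_n) = \delta$ and $\gamma_t(G_n) = 2\mu^*(G_n) - \delta + 2$. The strategy is modular: first identify a base graph $B$ on which the equality already holds, then iteratively attach copies of a ``neutral'' gadget $H$ that preserves the equality at each step.

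For the base, I would take $B = K_{\delta+1}$ when $\delta$ is even, since then every maximal matching of $K_{\delta+1}$ is maximum and so $\mu^*(B) = \delta/2$, $\gamma_t(B) = 2$, and hence $2\mu^*(B) - \delta + 2 = 2 = \gamma_t(B)$. For $\delta$ odd, the proof of Proposition~\ref{prop:gammatdeg3} suggests modifying $K_{\delta+1}$ by attaching a single extra vertex $x$ of degree $\delta$ outside the matching (the very vertex used as a substitute in the proof), whose presence shifts the parity and realizes the equation.

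For the gadget, I would design $H$ with a specified attachment operation so that attaching one copy of $H$ to a graph $G$ produces a graph $G'$ satisfying $\mu^*(G') = \mu^*(G) + 1$ and $\gamma_t(G') = \gamma_t(G) + 2$, while preserving minimum degree $\delta$; then the identity $\gamma_t = 2\mu^* - \delta + 2$ is automatically preserved. This $1{:}2$ ratio of increments mirrors exactly the tight examples for $\delta = 1$ and $\delta = 2$ in Figure~\ref{fig:exgammat2mu}, where each path-of-length-three arm of the spider and each subdivided edge of the ladder contributes $1$ to $\mu^*$ and $2$ to $\gamma_t$. Adapting these gadgets to $\delta \geq 3$ amounts to replacing the low-degree vertices (the leaves, the subdivision vertices) by $K_\delta$-like local structures that inherit the property of forcing two vertices into every minimum total dominating set. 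Defining $G_n$ as $B$ with $n$ copies of $H$ attached then yields the desired family, and verification proceeds by induction on $n$: the lower bound $\mu^*(G_n) \geq \mu^*(G_{n-1}) + 1$ is argued via Observation~\ref{obs:maxmathind} together with a count of the independence number inside the new copy of $H$, while $\gamma_t(G_n) \geq \gamma_t(G_{n-1}) + 2$ is argued by a private-neighborhood analysis identifying two vertices of $H$ that must lie in every minimum total dominating set.

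The main obstacle will be the explicit design of the gadget $H$. There is genuine tension between requiring $\delta(G_n) = \delta$ and requiring $H$ to contribute two \emph{forced} vertices to every minimum total dominating set, since the classical TDS-forcing devices (pendant vertices and degree-two internal paths) are forbidden when $\delta \geq 3$. Overcoming this tension by building $H$ from $K_\delta$-cliques glued along carefully chosen edges, so that two specific ``anchor'' vertices of $H$ must lie in every minimum total dominating set without any vertex of $H$ having degree less than $\delta$, is the central technical content of the proof.
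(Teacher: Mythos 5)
Your proposal is a plan rather than a proof, and the gap is exactly where you locate it yourself: the gadget $H$ is never constructed, and its existence is the entire content of the statement. The incremental scheme needs, for each attachment, both $\mu^*(G')\le \mu^*(G)+1$ and $\gamma_t(G')\ge \gamma_t(G)+2$ (together with the reverse inequalities), and neither is a routine consequence of ``attach a $K_\delta$-like structure'': forcing two specific vertices into \emph{every} minimum total dominating set is precisely what pendant vertices and degree-two paths buy you, and you have correctly observed that these are unavailable once $\delta\ge 3$. Until $H$ is exhibited and those four inequalities are verified, there is no proof. There is also a concrete problem with your odd-$\delta$ base: for $\delta=3$, the graph $K_4$ plus a vertex $x$ joined to three of its vertices has $\mu^*=2$ (no single edge is a maximal matching since any three of the remaining vertices contain an edge of $K_4$), so $2\mu^*-\delta+2=3$, yet two vertices of $K_4$ that are both adjacent to $x$ form a total dominating set, so $\gamma_t=2$. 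The parity shift you hope for does not occur.

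The paper avoids the gadget problem entirely with a single global construction: take $n\ge(\delta+1)/2$ disjoint copies of $K_2$ forming a matching $M$, and for \emph{every} $\delta$-subset $A$ of $V(M)$ add a new vertex $v_A$ with neighborhood $A$. Then $M$ is maximal, so $\mu^*\le n$; the minimum degree is $\delta$; and a short counting argument gives $\gamma_t\ge 2n-\delta+2$: if a total dominating set $S$ omitted $\delta$ or more vertices of $V(M)$, some $v_A$ would be undominated, so $|S\cap V(M)|\ge 2n-\delta+1$, and if $S$ omits any vertex $v$ of $V(M)$ then dominating $M_p(v)$ forces a vertex of $S$ outside $V(M)$, giving $|S|\ge 2n-\delta+2$. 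Sandwiching with Proposition~\ref{prop:gammatdeg3} finishes the argument, and infinitude comes from varying $n$. If you want to salvage your modular approach, the lesson from this construction is that the ``forcing'' is done not by local low-degree structure but by saturating all $\delta$-subsets of the matched vertices with private outside vertices; replicating that inside a small attachable gadget while controlling $\mu^*$ exactly is likely harder than the direct route.
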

\begin{proof}
Let $\delta \geq 3$ be a positive integer.
Let $M$ be disjoint union of $n$ copies of $K_2$ where $n\geq (\delta +1)/2$.
For every subset $A$ of $V(M)$ with $|A|=\delta$, add a new vertex $v_A$ whose neighborhood is $A$.
Let $G$ be the resulting graph.
The degree of a vertex in $V(M)$ is $\binom{2n-1}{\delta -1}\geq \binom{\delta}{\delta-1}=\delta$ and each vertex not in $V(M)$ is of degree $\delta$.
Therefore, the minimum degree of $G$ is $\delta$.
As $M$ is a maximal matching, we have $\mu^*(G)\leq n$.
Let $S$ be a total dominating set of $G$.
If $V(M)\subseteq S$, then $|S|\geq 2n> 2n-\delta +2$.
Otherwise, let $v$ be a vertex in $V(M)\backslash S$.
Since $S$ is a total dominating set, there exists a vertex $x\in S$ adjacent to $M_p(v)$.
Notice that $x\notin V(M)$.
If $|V(M)\backslash S|\geq \delta $, then there exists no vertex in $S$ adjacent to $v_A$ whenever $A\subseteq V(M)\backslash S$ and $|A|=\delta$ which contradicts with $S$ being a total dominating set.
Therefore, we get $|V(M)\backslash S|\leq \delta -1$ and then, $|S\cap V(M)|\geq 2n-(\delta-1)$.
Together with $x$, we see that $|S|\geq 2n-(\delta-1)+1=2n-\delta +2$.
Thus, in both cases we have the inequality $2n-\delta+2\leq |S|$ which gives $2n-\delta +2\leq \gamma_t(G)$.
Then we obtain the inequality chain
$2n-\delta+2\leq \gamma_t(G)\leq 2\mu^*(G)-\delta+2\leq 2n-\delta+2$ which implies $\gamma_t(G)= 2\mu^*(G)-\delta+2$.
\end{proof}

\subsection{Construction of $(\gamma_t,2\mu^*)$-Graphs}
In this paper, we not only show that there exist infinitely many $(\gamma_t,2\mu^*)$-graphs in both cases of the minimum degree but also provide a procedure which enables to construct all $(\gamma_t,2\mu^*)$-graphs.

A \emph{leaf} of a graph is a vertex with degree one,
while a \emph{support vertex} of a graph is a vertex adjacent to a leaf.
Let $sup(G)$ denote the set of all support vertices in the graph $G$.
Let $S^+(G)$ be the set of support vertices which are adjacent to a support vertex and let $S^-(G)=sup(G)\backslash S^+(G)$ which is the set of isolated vertices in the subgraph of $G$ induced by $sup(G)$.
A matching is called \emph{perfect} if it covers all the vertices in the graph.
\begin{theorem}\label{thm:MainThm1}
Let $G$ be a graph with $1\leq \delta(G)\leq 2$.
Then, $G$ is a $(\gamma_t,2\mu^*)$-graph if and only if there exists a maximal matching $M=M^+ \cup M^- \cup M^*$ in $G$ satisfying the following conditions:
\begin{enumerate}[label=(\roman*)]
	\item $M^+$ is a perfect matching of the subgraph of $G$ induced by $S^+(G)$.
	\item $S^-(G)\subseteq V(M^-)$ and every edge in $M^-$ joins a vertex from $S^-(G)$ and a vertex from $V(G)\backslash sup(G)$.
	\item For every $v\in S^-(G) \cup V(M^*)$, $M_p(v)$ is the unique neighbor of $v$ among the vertices in $V(M)$.
	\item For every distinct vertices $u$ and $v$ in $S^-(G) \cup V(M^*)$, whenever $u$ and $v$ have a common neighbor, there exists a vertex whose neighborhood is $\{M_p(u),M_p(v)\}$.
\end{enumerate}
\end{theorem}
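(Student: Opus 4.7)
I will prove both directions.

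For sufficiency, assume a maximal matching $M = M^+ \cup M^- \cup M^*$ satisfies (i)--(iv). By Proposition~\ref{prop:gammatdeg3} and the maximality of $M$ we have $\gamma_t(G) \leq 2\mu^*(G) \leq 2|M|$, so it suffices to prove $\gamma_t(G) \geq 2|M|$. Take any total dominating set $S$; I assign each edge $e \in M$ a pair of two distinct vertices of $S$ (the \emph{credit} of $e$) such that no vertex of $S$ is credited to two different edges, which immediately gives $|S| \geq 2|M|$. For $e \in M^+$, condition (i) places both endpoints in $S^+(G) \subseteq sup(G) \subseteq S$, and they form the credit. For $e = uv \in M^- \cup M^*$ I split on $|S \cap e|$: in case $2$ credit both endpoints; in case $1$ with $u \in S$, condition (iii) applied to $u \in S^-(G) \cup V(M^*)$ gives $N(u) \cap V(M) = \{v\}$ with $v \notin S$, so $u$ requires an external dominator $w_u \in N(u) \cap (S \setminus V(M))$, and the credit is $\{u, w_u\}$; in case $0$ (only possible for $e \in M^*$, since $M^-$-edges contain an $S^-$-vertex, which is always in $S$), both endpoints need external dominators $w_u, w_v$, and the credit is $\{w_u, w_v\}$. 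These two are distinct: if $w_u = w_v$ then $u, v \in V(M^*)$ have a common neighbor, so (iv) yields $x$ with $N(x) = \{M_p(u), M_p(v)\} = \{v, u\}$, which is undominated by $S$ since $u, v \notin S$, a contradiction.

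Disjointness across edges is immediate for $V(M)$-credits. Suppose $w \in S \setminus V(M)$ is credited to distinct edges $e_1, e_2$ as external dominator of $\sigma_1 \in V(e_1)$ and $\sigma_2 \in V(e_2)$; then $\sigma_1 \neq \sigma_2$ lie in $S^-(G) \cup V(M^*)$ with common neighbor $w$, so (iv) gives a vertex $y$ with $N(y) = \{M_p(\sigma_1), M_p(\sigma_2)\}$. Domination of $y$ by $S$ puts some $M_p(\sigma_i)$ into $S$, say $M_p(\sigma_1) \in S$. If $\sigma_1 \in S$ then $|S \cap e_1| = 2$ and the credit of $e_1$ is $\{\sigma_1, M_p(\sigma_1)\} \subseteq V(M)$, excluding $w$; if $\sigma_1 \notin S$ then $|S \cap e_1| \geq 1 > 0$, contradicting the case $0$ assumption under which $w$ would have been used. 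Either way $w$ cannot be in the credit of $e_1$, and we conclude $|S| \geq 2|M|$.

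For necessity, fix a minimum maximal matching $M_0$; then $V(M_0)$ is a total dominating set of size $\gamma_t(G)$, hence a minimum TDS. A standard consequence of minimality is that for each $v \in V(M_0)$ there exists a \emph{total private neighbor} $y \in V(G)$ with $N(y) \cap V(M_0) = \{v\}$, and any such $y \in V(M_0)$ must equal $M_{0,p}(v)$. Every support vertex lies in $V(M_0)$ since each leaf forces its support into every TDS. I modify $M_0$ through a sequence of edge-swaps (each preserving both $|M_0|$ and the minimum-maximal-matching property) into a matching $M$, then set $M^+ = \{e \in M : V(e) \subseteq S^+(G)\}$, $M^- = \{e \in M : V(e) \cap S^-(G) \neq \emptyset\}$, $M^* = M \setminus (M^+ \cup M^-)$. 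Condition (ii) is then automatic: any $S^-$-vertex has no support neighbor so its $M$-partner lies in $V(G) \setminus sup(G)$, and $S^-(G) \subseteq sup(G) \subseteq V(M)$ forces $S^-(G) \subseteq V(M^-)$. For (i), whenever $s \in S^+(G)$ is matched outside $S^+(G)$ I take a support-neighbor $t \in N(s) \cap S^+(G)$ and swap $\{sM_p(s), tM_p(t)\}$ for $\{st\}$ together with a repair edge between $M_p(s)$ and $M_p(t)$ when available; when no repair is available, the resulting smaller matching fails to be maximal by the minimum-maximal-matching property of $M$, and an incidence measure counting $S^+$-vertices mismatched inside $S^+(G)$ guarantees termination.

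The main difficulty is (iii) and (iv). For (iii), an additional $V(M)$-neighbor $v'' \neq M_p(v)$ of some $v \in S^-(G) \cup V(M^*)$ allows a further swap yielding either a strictly smaller maximal matching or a strictly smaller TDS, contradicting minimality. For (iv), given distinct $u, v \in S^-(G) \cup V(M^*)$ sharing a common neighbor $w$ while no vertex has neighborhood $\{M_p(u), M_p(v)\}$, I consider $S' := (V(M) \cup \{w\}) \setminus \{M_p(u), M_p(v)\}$ of size $|V(M)| - 1$; using (iii), the hypothesized absence of the witness, and a careful analysis of the vertices $z$ with $N(z) \cap V(M) \subseteq \{M_p(u), M_p(v)\}$, one shows $S'$ is a TDS, contradicting minimality of $V(M)$. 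The chief obstacle is ruling out "stray" total private neighbors of $M_p(u)$ and $M_p(v)$ lying outside $N(w) \cup \{u, v\}$: such strays may require still further swaps on $M$ to absorb into $V(M)$ before the $S'$ argument goes through, and arranging (i), (iii), and (iv) simultaneously rather than individually is the principal technical point of the proof.
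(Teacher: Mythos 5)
Your sufficiency direction is correct and is essentially the paper's own argument: your per\nobreakdash-edge credit scheme is the injection $V(M)\to T$ that the paper constructs (map $u$ to $M_p(u)$ when $M_p(u)\in T$, otherwise to an external dominator of $u$, with condition (iv) forcing injectivity), merely bookkept edge by edge instead of vertex by vertex.

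The necessity direction has a genuine gap, and it stems from a wrong turn at the outset. You fix a minimum maximal matching $M_0$ and propose to repair it by a sequence of edge swaps until (i), (iii) and (iv) hold, but none of the swap arguments is carried out: termination of the repair for (i) rests on an unspecified ``incidence measure,'' the swap for (iii) is a one\nobreakdash-sentence assertion, and for (iv) you explicitly leave open what you yourself call ``the principal technical point'' of making the swaps compatible. That unresolved point \emph{is} the proof. The paper needs no swaps at all: for \emph{any} minimum maximal matching $M$ of a $(\gamma_t,2\mu^*)$-graph, $V(M)$ is already a minimum total dominating set of size $2\mu^*(G)$, and a single claim does all the work --- if $v\in V(M)$ and $M_p(v)\notin sup(G)$, then $N(v)\cap V(M)=\{M_p(v)\}$. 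Indeed, otherwise $V(M)\setminus\{M_p(v)\}$ still totally dominates $v$, so by minimality some vertex $x$ satisfies $N(x)\cap V(M)=\{M_p(v)\}$; such an $x$ cannot lie in $V(M)$ (its partner would have to be $M_p(v)$, forcing $x=v$), hence $N(x)\subseteq V(M)$ by maximality of $M$, so $x$ is a leaf and $M_p(v)$ is a support vertex, a contradiction. From this claim, (i), (ii) and (iii) follow at once for the untouched matching $M$ under the natural partition ($M^+$ the edges with both ends in $S^+(G)$, $M^-$ those meeting $S^-(G)$, $M^*$ the rest). For (iv), the set $A=(V(M)\setminus\{M_p(u),M_p(v)\})\cup\{w\}$ has size $2\mu^*(G)-1<\gamma_t(G)$, hence is not a total dominating set; any undominated vertex $x$ must lie outside $V(M)$ (each vertex of $V(M)$ other than $u,v$ keeps its partner in $A$, while $u$ and $v$ see $w$), so $N(x)\subseteq V(M)$ and thus $N(x)\subseteq\{M_p(u),M_p(v)\}$, and since neither $M_p(u)$ nor $M_p(v)$ is a support vertex, $x$ is not a leaf, forcing $N(x)=\{M_p(u),M_p(v)\}$. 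In particular, the ``stray total private neighbors'' you worry about would have to be leaves hanging from non\nobreakdash-support vertices and cannot exist; the obstacle you flag is a phantom, but as written your necessity argument is incomplete.
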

The proof of Theorem \ref{thm:MainThm1} will be given in the next section.
Note that when $\delta(G)=2$ there is no leaf in $G$ and hence, $sup(G)=\emptyset$.
Therefore, Theorem \ref{thm:MainThm1} implies the following result for the graphs with minimum degree two.
\begin{corollary}\label{cor:maindeg2}
Let $G$ be a graph with $\delta(G)=2$.
Then, $G$ is a $(\gamma_t,2\mu^*)$-graph if and only if there exists a maximal matching $M$ in $G$ satisfying the following two conditions:
\begin{enumerate}[label=(\roman*)]
	\item For every $v\in V(M)$, $M_p(v)$ is the unique neighbor of $v$ among the vertices in $V(M)$.
	\item For every distinct vertices $u$ and $v$ in $V(M)$, whenever $u$ and $v$ have a common neighbor, there exists a vertex whose neighborhood is $\{M_p(u),M_p(v)\}$.
\end{enumerate}
\end{corollary}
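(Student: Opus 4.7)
The plan is to deduce this corollary directly from Theorem \ref{thm:MainThm1} by showing that the hypothesis $\delta(G)=2$ collapses the decomposition $M=M^+\cup M^-\cup M^*$ to $M=M^*$, after which conditions (iii) and (iv) of the theorem become verbatim conditions (i) and (ii) of the corollary. First I would record, as the paper already notes, that if $\delta(G)=2$ then no vertex has degree one, so $G$ has no leaves; in particular no vertex is adjacent to a leaf, giving $sup(G)=\emptyset$ and therefore $S^+(G)=S^-(G)=\emptyset$.

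Next I would translate each of the four conditions of Theorem \ref{thm:MainThm1} under this hypothesis. Condition (i) demands that $M^+$ be a perfect matching of the subgraph of $G$ induced by $S^+(G)=\emptyset$, which forces $M^+=\emptyset$. Condition (ii) requires every edge of $M^-$ to have one endpoint in $S^-(G)=\emptyset$, which forces $M^-=\emptyset$ as well. So any matching $M=M^+\cup M^-\cup M^*$ witnessing the theorem reduces to $M=M^*$, and the quantifier "for every $v\in S^-(G)\cup V(M^*)$" in (iii) becomes "for every $v\in V(M)$", while "for every distinct $u,v\in S^-(G)\cup V(M^*)$" in (iv) becomes "for every distinct $u,v\in V(M)$". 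Under these reductions, (iii) and (iv) are exactly the two conditions listed in the corollary.

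For the forward direction I would then simply invoke Theorem \ref{thm:MainThm1}: if $G$ is a $(\gamma_t,2\mu^*)$-graph with $\delta(G)=2$, it admits some maximal matching with a decomposition satisfying (i)--(iv) of the theorem, which by the argument above yields a maximal matching $M$ satisfying the two conditions of the corollary. For the converse, given a maximal matching $M$ in $G$ satisfying conditions (i) and (ii) of the corollary, I would set $M^+:=\emptyset$, $M^-:=\emptyset$, and $M^*:=M$, and verify that conditions (i)--(iv) of Theorem \ref{thm:MainThm1} all hold trivially (the first two because both sides are empty, the last two by construction), so that the theorem certifies $G$ as a $(\gamma_t,2\mu^*)$-graph.

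There is no genuine obstacle here, since the corollary is a straightforward specialization of the main theorem; the only bookkeeping point requiring a moment's care is checking that the condition "every edge in $M^-$ joins a vertex from $S^-(G)$ and a vertex from $V(G)\setminus sup(G)$" is a vacuous requirement that nonetheless \emph{forces} $M^-$ to be empty whenever $S^-(G)$ is empty, rather than merely allowing it to be arbitrary.
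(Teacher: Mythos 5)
Your proposal is correct and follows exactly the paper's route: the paper likewise observes that $\delta(G)=2$ forces $sup(G)=\emptyset$, so $S^+(G)=S^-(G)=\emptyset$, the decomposition collapses to $M=M^*$, and conditions (iii) and (iv) of Theorem \ref{thm:MainThm1} become the two conditions of the corollary. Your extra care in noting that conditions (i) and (ii) of the theorem actually force $M^+=M^-=\emptyset$ is a worthwhile, correct elaboration of the same argument.
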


By Theorem \ref{thm:MainThm1} we can characterize $(\gamma_t,2\mu^*)$-graphs in a constructive way.
Let $\mathcal{F}$ be the family of graphs obtained by following the steps below:
\begin{enumerate}
	\item
	Let $M$ be disjoint union of some copies of $K_2$ and $A$ be a set of vertices disjoint with $V(M)$.
	\item
	Mark some (might be none) of the vertices in $V(M)$.
	Let $L$ be the set of vertices $v$ in $V(M)$ such that $M_p(v)$ is unmarked.
	\item
	For each vertex $v$ in $A$ draw at least two edges joining $v$ to vertices in $V(M)$.
	\item
	For every leaf $v$ in $L$, join $v$ to some vertices in $A$.
	\item
	Draw some (might be none) edges joining two vertices of $V(M)\backslash L$.
	\item
	For every distinct vertices $u$ and $v$ in $L$,
	if $u$ or $v$ is marked and $N(u)\cap N(v)\neq \emptyset$,
	then add a new vertex whose neighborhood is $\{M_p(u),M_p(v)\}$
	unless such a vertex already exists.
	\item
	For every distinct vertices $u$ and $v$ in $L$,
	if none of $u$ and $v$ is marked and $(N(u)\cap N(v))\cup (N(M_p(u)) \cap N(M_p(v))\neq \emptyset$,
	then add two new vertices with neighborhoods $\{u,v\}$ and $\{M_p(u),M_p(v)\}$ unless such vertices already exist (when $v=M_p(u)$ add only one such vertex).
	\item
	Finally, for every marked vertex $v$, if $v$ is not a support vertex, then add some new vertices and join them to $v$.
\end{enumerate}
Construction of a graph in $\mathcal{F}$ is illustrated in Figure \ref{fig:exF}.
\begin{figure}\center
\begin{tikzpicture}[line cap=round,line join=round,>=triangle 45,x=1.5cm,y=1.5cm]
\clip(0.8,0.8) rectangle (11,6.2);
\draw [line width=1pt] (2,5)-- (3,5);
\draw [line width=1pt] (2,4)-- (3,4);
\draw [line width=1pt] (2,3)-- (3,3);
\draw [line width=1pt] (2,2)-- (3,2);
\draw (0.85,6) node[anchor=north west] {$A$};
\draw (2.3,6) node[anchor=north west] {$M$};
\draw (2.3,1.8) node[anchor=north west] {$L$};
\draw (8.3,1.3) node[anchor=north west] {$G$};
\draw [->,line width=1pt] (3.4,3.4) -- (3.8,3.4);
\draw [line width=1pt] (5,5)-- (6,5);
\draw [line width=1pt] (5,4)-- (6,4);
\draw [line width=1pt] (5,3)-- (6,3);
\draw [line width=1pt] (5,2)-- (6,2);
\draw [line width=1pt] (5,4)-- (4,5);
\draw [line width=1pt] (6,3)-- (4,4);
\draw [line width=1pt] (5,3)-- (4,4);
\draw [line width=1pt] (5,2)-- (4,5);
\draw [line width=1pt] (6,2)-- (4,3);
\draw [line width=1pt] (4,4)-- (5,5);
\draw [line width=1pt] (6,4)-- (6,5);
\draw [line width=1pt] (4,3)-- (5,3);
\draw [->,line width=1pt] (6.3,3.4) -- (6.7,3.4);
\draw [line width=1pt] (8,5)-- (9,5);
\draw [line width=1pt] (9,5)-- (9,4);
\draw [line width=1pt] (9,4)-- (8,4);
\draw [line width=1pt] (8,3)-- (9,3);
\draw [line width=1pt] (8,2)-- (9,2);
\draw [line width=1pt] (7,3)-- (8,3);
\draw [line width=1pt] (7,3)-- (9,2);
\draw [line width=1pt] (8,2)-- (7,5);
\draw [line width=1pt] (7,4)-- (8,5);
\draw [line width=1pt] (7,4)-- (9,3);
\draw [line width=1pt] (7,4)-- (8,3);
\draw [line width=1pt] (8,4)-- (7,5);
\draw [line width=1pt] (9,6)-- (8,4);
\draw [line width=1pt] (8,5)-- (8,6);
\draw [line width=1pt] (9,5)-- (10,5);
\draw [line width=1pt] (9,4)-- (10,3);
\draw [line width=1pt] (10,3)-- (9,2);
\draw [line width=1pt] (10,2)-- (9,3);
\draw [line width=1pt] (10,2)-- (8,3);
\draw [line width=1pt] (9,3)-- (10,1);
\draw [line width=1pt] (10,1)-- (8,2);
\draw [line width=1pt] (10,4)-- (9,5);
\draw [line width=1.2pt,dash pattern=on 1pt off 1pt on 1pt off 4pt] (1.8,4.2)-- (2.2,4.2);
\draw [line width=1.2pt,dash pattern=on 1pt off 1pt on 1pt off 4pt] (2.2,4.2)-- (2.2,3.2);
\draw [line width=1.2pt,dash pattern=on 1pt off 1pt on 1pt off 4pt] (2.2,3.2)-- (3.2,3.2);
\draw [line width=1.2pt,dash pattern=on 1pt off 1pt on 1pt off 4pt] (3.2,3.2)-- (3.2,1.8);
\draw [line width=1.2pt,dash pattern=on 1pt off 1pt on 1pt off 4pt] (3.2,1.8)-- (1.8,1.8);
\draw [line width=1.2pt,dash pattern=on 1pt off 1pt on 1pt off 4pt] (1.8,1.8)-- (1.8,4.2);
\begin{scriptsize}
\draw [fill=black] (1,5) circle (2.5pt);
\draw [fill=black] (1,4) circle (2.5pt);
\draw [fill=black] (1,3) circle (2.5pt);
\draw [fill=blue] (2,5) circle (2.5pt);
\draw [fill=blue] (3,5) circle (2.5pt);
\draw [fill=blue] (2,4) circle (2.5pt);
\draw [fill=black] (3,4) circle (2.5pt);
\draw [fill=black] (2,3) circle (2.5pt);
\draw [fill=black] (3,3) circle (2.5pt);
\draw [fill=black] (2,2) circle (2.5pt);
\draw [fill=black] (3,2) circle (2.5pt);
\draw [fill=black] (4,5) circle (2.5pt);
\draw [fill=black] (4,4) circle (2.5pt);
\draw [fill=black] (4,3) circle (2.5pt);
\draw [fill=blue] (5,5) circle (2.5pt);
\draw [fill=blue] (6,5) circle (2.5pt);
\draw [fill=blue] (5,4) circle (2.5pt);
\draw [fill=black] (6,4) circle (2.5pt);
\draw [fill=black] (5,3) circle (2.5pt);
\draw [fill=black] (6,3) circle (2.5pt);
\draw [fill=black] (5,2) circle (2.5pt);
\draw [fill=black] (6,2) circle (2.5pt);
\draw [fill=black] (7,5) circle (2.5pt);
\draw [fill=black] (7,4) circle (2.5pt);
\draw [fill=black] (7,3) circle (2.5pt);
\draw [fill=black] (8,5) circle (2.5pt);
\draw [fill=black] (9,5) circle (2.5pt);
\draw [fill=black] (8,4) circle (2.5pt);
\draw [fill=black] (9,4) circle (2.5pt);
\draw [fill=black] (8,3) circle (2.5pt);
\draw [fill=black] (9,3) circle (2.5pt);
\draw [fill=black] (8,2) circle (2.5pt);
\draw [fill=black] (9,2) circle (2.5pt);
\draw [fill=black] (9,6) circle (2.5pt);
\draw [fill=black] (8,6) circle (2.5pt);
\draw [fill=black] (10,5) circle (2.5pt);
\draw [fill=black] (10,3) circle (2.5pt);
\draw [fill=black] (10,2) circle (2.5pt);
\draw [fill=black] (10,1) circle (2.5pt);
\draw [fill=black] (10,4) circle (2.5pt);
\end{scriptsize}
\end{tikzpicture}
\caption{ Construction of a graph $G$ in $\mathcal{F}$. Marked vertices are shown in blue.
}
\label{fig:exF}
\end{figure}
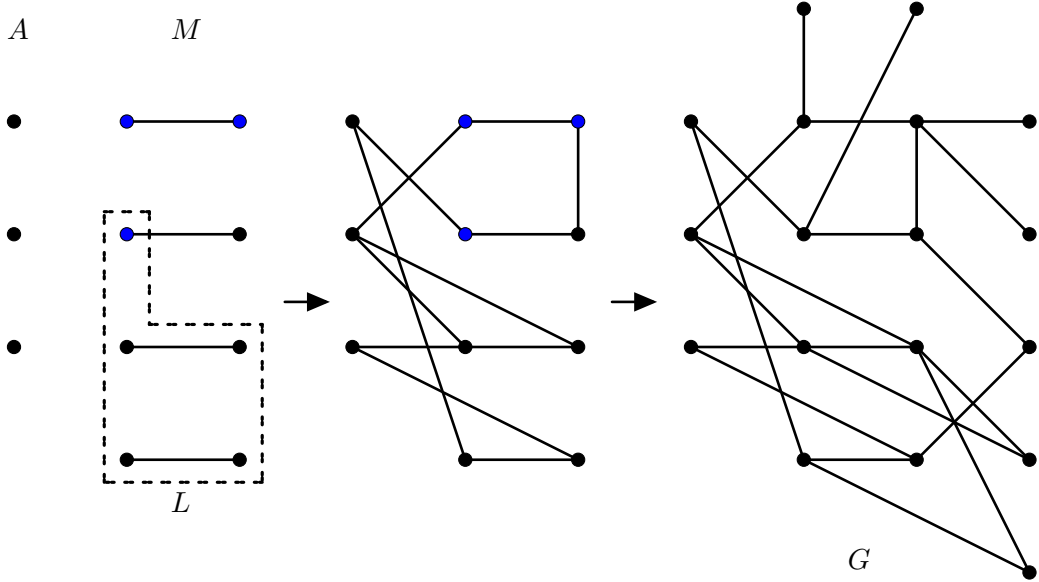	
In the procedure above, when at least one vertex is marked in the second step we obtain a graph with minimum degree one.
If no vertex is marked in the second step, then $L=V(M)$ and the resulting graph is of minimum degree two.

Let $G$ be a member of $\mathcal{F}$.
Observe that marked vertices corresponds to $sup(G)$.
Note also that $S^+(G)$ is the set of marked vertices $v$ such that $M_p(v)$ is also marked and $S^-(G)$ is the set of marked vertices $v$ such that $M_p(v)$ is not marked.
Therefore, we can split $M$ into three matchings $M^+=\{vM_p(v):v\in V(M), \text{ both } v \text{ and } M_p(v) \text{ are marked} \}$,
$M^-=\{vM_p(v):v\in V(M), v \text{ is marked but } M_p(v) \text{ is not  marked} \}$ and
$M^*=\{vM_p(v):v\in V(M),$ \text{ none of } $v \text{ and } M_p(v) \text{ is marked} \}$.
Thus, the set $L$ corresponds to $S^-(G)\cup V(M^*)$.
Under the assumption of condition (iv) of Theorem \ref{thm:MainThm1},
for every distinct vertices $u$ and $v$ in $V(M^*)$ we have $N(u)\cap N(v)\neq \emptyset$ if and only if $N(M_p(u))\cap N(M_p(v))\neq \emptyset $ since $M_p(M_p(x))=x$ for every $x$.
That is why two vertices are simultaneously inserted in the seventh step of the construction of $\mathcal{F}$.
Consequently, it is easy to verify that a graph $G$ has a maximal matching fulfilling the conditions in Theorem \ref{thm:MainThm1} if and only if $G$ belongs to $\mathcal{F}$ and hence, we obtain the following result.
\begin{corollary}\label{cor:MainThm2}
A graph $G$ is a $(\gamma_t,2\mu^*)$-graph if and only if $G\in \mathcal{F}$.
\end{corollary}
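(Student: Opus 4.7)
The plan is to derive Corollary \ref{cor:MainThm2} directly from Theorem \ref{thm:MainThm1}: since Proposition \ref{prop:gammatdeg3} already rules out $\delta(G) \geq 3$ for $(\gamma_t,2\mu^*)$-graphs, it suffices to show that a graph $G$ with $1 \leq \delta(G) \leq 2$ belongs to $\mathcal{F}$ if and only if it admits a maximal matching $M = M^+ \cup M^- \cup M^*$ satisfying conditions (i)--(iv) of that theorem. Both directions are essentially translations between the step-by-step construction and the structural conditions, so the proof is a matter of matching up the right pieces.

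For the direction $G \in \mathcal{F} \Rightarrow G$ satisfies the matching conditions, I would fix a graph produced by the eight-step procedure and take $M$ to be the matching laid down in step 1, partitioned into $M^+,M^-,M^*$ according to whether both, one, or neither endpoint is marked (as indicated in the paragraph preceding the corollary). The heart of the verification is to check that the marked vertices are exactly $sup(G)$: every marked vertex is a support vertex either because step 8 attaches a new leaf to it, or because it was already one (its partner in $L$ might be a leaf, or a leaf was added earlier); and every unmarked vertex has all of its neighbors of degree at least two, since new leaves appear only in step 8 attached to marked vertices, and any leaf sitting in $V(M)$ must lie in $L$ with its unique $V(M)$-neighbor being its matched partner, which is necessarily marked. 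From here, (i) and (ii) follow from the very definitions of $M^+$ and $M^-$; (iii) holds because $L$-vertices receive new edges only to $A$ in step 4, while step 5 restricts intra-$V(M)$ edges to $V(M) \setminus L$; and (iv) is enforced word-for-word by steps 6--7.

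For the reverse direction, I would take a graph $G$ with a maximal matching decomposition satisfying (i)--(iv) and reconstruct it by executing the eight steps. Begin with $M$, declare $A := \{v \in V(G) \setminus V(M) : d(v) \geq 2\}$, and mark exactly the vertices of $sup(G)$; by (i) and (ii) the set $L$ in step 2 then coincides with $S^-(G) \cup V(M^*)$. Step 3 installs the edges between $A$ and $V(M)$ (Observation \ref{obs:maxmathind} gives $N(v) \subseteq V(M)$ for $v \notin V(M)$); step 4 installs edges between leaf-members of $L$ and $A$; step 5 installs all intra-matching edges of $G[V(M)]$ not in $M$, which by (iii) avoid $L$; and step 8 installs the leaves adjacent to marked vertices. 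Condition (iv), together with the observation (noted in the paragraph before the corollary) that for $u,v \in V(M^*)$ one has $N(u)\cap N(v) \ne \emptyset$ iff $N(M_p(u))\cap N(M_p(v)) \ne \emptyset$, exactly matches the insertions prescribed by steps 6 and 7.

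The main obstacle is bookkeeping rather than conceptual: cleanly separating vertices of $V(G)\setminus V(M)$ into the $A$-vertices and the step-8 leaves, then ensuring every edge of $G$ is accounted for by some step. A few degenerate cases also need attention, for example the case $v = M_p(u)$ in step 7, and the situation where a marked vertex is already a support vertex so that step 8 contributes nothing to it. Finally one should verify that the matching $M$ remains maximal after every step of the construction, which is immediate because the new edges added in steps 3--8 always have at least one endpoint in $V(M)$, so $V(G) \setminus V(M)$ stays independent.
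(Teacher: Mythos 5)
Your proposal is correct and follows essentially the same route as the paper: the paper also derives the corollary by identifying marked vertices with $sup(G)$, the set $L$ with $S^-(G)\cup V(M^*)$, and steps 6--7 with condition (iv) of Theorem \ref{thm:MainThm1}, then declaring the two-way translation between the construction and the matching conditions "easy to verify." Your write-up simply supplies more of that verification explicitly.
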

Corollary \ref{cor:MainThm2} enables us to construct all $(\gamma_t,2\mu^*)$-graphs and shows that there exist infinitely many $(\gamma_t,2\mu^*)$-graphs in both cases of the minimum degree.

\subsection{$(\gamma_t,2\mu^*)$-Graphs with Minimum Degree Two}
In this subsection, we provide a procedure to determine whether a given graph with minimum degree two is a $(\gamma_t,2\mu^*)$-graph.

Let $\mathcal{K}$ be the family of graphs consists of cycles $C_3$ with a common edge, that is, graphs isomorphic to a graph $G$ with $V(G)=\{u,v,w_1,\dots,w_n\}$ and $E(G)=\{uv,uw_1,vw_1,\dots,uw_n,vw_n\}$ for some positive integer $n$.
Let $d_2(G)$ be the set of vertices in $G$ of degree two, i.e., $d_2(G)=\{v\in V(G):d(v)=2\}$.

Let $G$ be a graph with connected components $G_1,\dots,G_n$ and no isolated vertex.
Since $\gamma_t(G)=\sum_{i=1}^n \gamma_t(G_i)$, $\mu^*(G)=\sum_{i=1}^n  \mu^*(G_i)$ and $\gamma_t(G_i)\leq 2\mu^*(G_i)$ for $i=1,\dots,n$,
we have $\gamma_t(G)=2\mu^*(G)$ if and only if $\gamma_t(G_i)=2\mu^*(G_i)$
for every $i\in\{1,\dots,n\}$.
Therefore, it is sufficient to consider only connected graphs.

For every connected graph $G\notin \mathcal{K}\cup \{C_6\}$,
let $\mathcal{M}=\mathcal{M}(G)$ be the set of edges $uv$ such that there exist $x\in N(u)\cap d_2(G)$ and $y\in N(v)\cap d_2(G)$ so that the edges $xu,uv,vy$ belong to the same induced $C_6$ in $G$.
In other words,
$\mathcal{M}$ can be constructed as follows:
Initially set $\mathcal{M}=\emptyset$ and then,
for every distinct vertices $x$ and $y$ in $d_2(G)$, if the subgraph of $G$ induced by $N(x)\cup N(y)\cup \{x,y\}$ is a $C_6$, then add both of the edges of the cycle which are incident to neither $x$ nor $y$ to the set $\mathcal{M}$.

For a given graph $G$ with $\delta(G)=2$, the following result enables to check whether $\gamma_t(G)=2\mu^*(G)$.
\begin{theorem}\label{thm:MainDeg2}
Let $G$ be a connected graph with $\delta(G)=2$.
Then, $G$ is a $(\gamma_t,2\mu^*)$-graph if and only if
$G\in \mathcal{K}\cup \{C_6\}$ or
$\mathcal{M}$ is a maximal matching satisfying the following two conditions:
\begin{enumerate}[label=(\roman*)]
	\item For every $v\in V(\mathcal{M})$, $\mathcal{M}_p(v)$ is the unique neighbor of $v$ among the vertices of $V(\mathcal{M})$.
	\item For every distinct vertices $u$ and $v$ in $V(\mathcal{M})$, whenever $u$ and $v$ share a neighbor, there exists a vertex with neighborhood $\{\mathcal{M}_p(u),\mathcal{M}_p(v)\}$.
\end{enumerate}
\end{theorem}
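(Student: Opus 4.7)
The backward direction is straightforward: if $G\in\mathcal{K}$, write $V(G)=\{u,v,w_1,\ldots,w_n\}$ as in the definition, and observe that $\{uv\}$ is a maximal matching of size one while $\{u,v\}$ is a total dominating set of size two, so $\gamma_t(G)=2=2\mu^*(G)$. If $G=C_6$, a direct check gives $\gamma_t=4=2\mu^*$. If $\mathcal{M}$ is a maximal matching of $G$ satisfying (i) and (ii), Corollary \ref{cor:maindeg2} immediately yields $\gamma_t(G)=2\mu^*(G)$.

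For the forward direction, fix a connected $(\gamma_t,2\mu^*)$-graph $G$ with $\delta(G)=2$ and $G\notin\mathcal{K}\cup\{C_6\}$. By Corollary \ref{cor:maindeg2}, choose a maximal matching $M$ of $G$ satisfying (i) and (ii); the plan is to prove $M=\mathcal{M}$, from which the conclusion follows. For the inclusion $M\subseteq\mathcal{M}$, fix $uv\in M$. Classify each neighbour of $u$ distinct from $v$ by whether it has degree two and whether it is adjacent to $v$. Using $\delta(G)=2$, condition (i), and the fact that $V(G)\setminus V(M)$ is independent, I argue that if $u$ admits no degree-two neighbour outside $N(v)\cup\{v\}$, then any higher-degree neighbour $x^*$ of $u$ outside $N(v)$ would, via condition (ii) applied to the pair $(u,w)$ for $w\in V(M)\cap N(x^*)\setminus\{u,v\}$, create a degree-two neighbour of $v$ outside $N(u)$; reapplying (ii) to that vertex would then produce a degree-two neighbour of $u$ outside $N(v)$. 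So either a degree-two non-common neighbour $x$ of $u$ exists, or all non-$v$ neighbours of $u$ lie in $N(v)$; in the latter case, a symmetric argument on $v$ combined with (ii) shows every common neighbour of $u,v$ has degree two with neighbourhood $\{u,v\}$, forcing $G\in\mathcal{K}$, contradicting the hypothesis. Given the desired $x$, its other neighbour $w$ lies in $V(M)\setminus\{u,v\}$, and applying (ii) to $(u,w)$ produces a degree-two vertex $y$ with $N(y)=\{v,M_p(w)\}$; a direct check shows $\{x,u,v,y,M_p(w),w\}$ induces a $C_6$ in $G$, so $uv\in\mathcal{M}$.

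For the inclusion $\mathcal{M}\subseteq M$, fix $uv\in\mathcal{M}$ with a witnessing induced $C_6$ $x-u-v-y-z-w-x$ where $d(x)=d(y)=2$, and analyse cases according to whether $x$ and $y$ belong to $V(M)$. In the main case $x,y\notin V(M)$, the maximality of $M$ forces $u,w,v,z\in V(M)$, and condition (i) applied at $u$ gives $M_p(u)=v$, hence $uv\in M$. In every other case, at least one auxiliary $M$-edge lies inside the $C_6$ (for instance, $ux\in M$ when $x\in V(M)$ with $M_p(x)=u$, which in turn forces $v\notin V(M)$), and the already-proven inclusion $M\subseteq\mathcal{M}$ then places a second $\mathcal{M}$-edge at $u$ distinct from $uv$; propagating conditions (i) and (ii) around the $C_6$ and through the external neighbourhoods of its vertices collapses $G$ either onto $C_6$ itself or onto a graph in $\mathcal{K}$, contradicting the hypothesis. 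Hence $M=\mathcal{M}$, completing the proof. The main obstacle is this structural case analysis: locating a degree-two non-common neighbour on each side of a given $M$-edge, and ruling out the witnessing $C_6$ configurations in which the degree-two endpoints lie in $V(M)$, both of which rely on systematic use of the hypothesis $G\notin\mathcal{K}\cup\{C_6\}$ to eliminate the degenerate subconfigurations.
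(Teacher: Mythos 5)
Your overall architecture matches the paper's: reduce via Corollary \ref{cor:maindeg2} to showing that a maximal matching $M$ satisfying (i) and (ii) must equal $\mathcal{M}$, and prove the two inclusions separately. Your argument for $M\subseteq\mathcal{M}$ is sound but more roundabout than necessary: there is no need to first hunt for a degree-two neighbour of $u$ outside $N(v)$, because condition (ii) manufactures degree-two vertices for free. The paper simply takes any $z\in(N(u)\cup N(v))\setminus\{u,v\}$ with $N(z)\neq\{u,v\}$ (which exists since $G$ is connected and not in $\mathcal{K}$), lets $w$ be a neighbour of $z$ outside $\{u,v\}$, notes $z\notin V(M)$ and $w\in V(M)$, and applies (ii) twice --- to the pair $(v,w)$ with common neighbour $z$, and then to the pair $(u,M_p(w))$ --- to produce degree-two vertices $x,y$ with $N(x)=\{u,M_p(w)\}$ and $N(y)=\{v,w\}$, closing an induced $C_6$ through $uv$.

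The genuine gap is in $\mathcal{M}\subseteq M$. The case $x,y\notin V(M)$ is indeed immediate, but it is the easy case; the substance of this inclusion lies entirely in the remaining configurations, which you dispose of with the single sentence that propagating (i) and (ii) ``collapses $G$ onto $C_6$ or onto a graph in $\mathcal{K}$.'' That is an assertion, not an argument, and it also mislocates the source of the contradiction. In the paper's proof one first observes that exactly one of $u,v$ lies in $V(M)$ (otherwise either $uv\in M$ or (i) fails at $u$); a chain of applications of (ii) --- crucially using that (ii) prescribes the \emph{entire} neighbourhood of the produced vertex --- then forces four of the six cycle vertices to have degree exactly two; connectivity together with $G\neq C_6$ supplies an external neighbour $s$ of one of the two remaining vertices, $s\notin V(M)$ has a second neighbour $r\in V(M)$, and a final application of (ii) forces $r$ to coincide with a vertex already shown to have degree two, yielding a degree contradiction rather than membership in $\mathcal{K}$. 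This chain is the heart of the theorem and must be written out; as it stands your proposal states the conclusion of the hard case instead of proving it.
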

For general graphs,
both of the problems of finding the total domination number and finding the minimum maximal matching number are NP-complete
(see, \cite{pfaff1983np} and \cite{yannakakis1980edge}, respectively.)
However,
constructing the set $\mathcal{M}$ and checking whether it is a maximal matching and satisfies the conditions (i) and (ii) in Theorem \ref{thm:MainDeg2} can be easily done by an algorithm with polynomial time complexity.
Therefore, the problem of determining whether $\gamma_t (G)=2\mu^*(G)$ for a graph $G$ with $\delta(G)= 2$ is polynomial time solvable.

The girth of a graph $G$, denoted by $g(G)$, is the length of a shortest cycle (if any) in $G$.
Acyclic graphs (forests) are considered to have infinite girth.
Let $G$ be a connected $(\gamma_t,2\mu^*)$-graph with $\delta(G)=2$.
If $G\in \mathcal{K}\cup \{C_6\}$, then the girth of $G$ is either 3 or 6.
Otherwise, $\mathcal{M}$ has at least two edges and thus, $G$ has an induced $C_6$.
Therefore, $G$ has an induced $C_3$ or $C_6$ and hence, Theorem \ref{thm:MainDeg2} implies the following result.
\begin{corollary}\label{cor:girth}
If $G$ is a $(\gamma_t,2\mu^*)$-graph with $\delta(G)= 2$, then	$g(G)\leq 6$.
\end{corollary}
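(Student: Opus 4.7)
The plan is to follow the hint already sketched in the paragraph preceding the corollary: apply Theorem \ref{thm:MainDeg2} and observe that every edge of the matching $\mathcal{M}$ is by construction contained in an induced $C_6$ of $G$, so once that matching is nonempty the girth bound is immediate.

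First I would reduce to the connected case. Since both $\gamma_t$ and $\mu^*$ are additive over connected components (as recalled in Section~\ref{sec:mainres}), $G$ is a $(\gamma_t,2\mu^*)$-graph if and only if every connected component of $G$ is. Moreover, Proposition~\ref{prop:gammatdeg3} rules out $(\gamma_t,2\mu^*)$-components of minimum degree at least three, so every component $H$ of $G$ has $\delta(H) \le 2$; combined with $\delta(G)=2$ this forces $\delta(H)=2$ for every component. Because $g(G)$ equals the minimum of $g(H)$ over the components, it suffices to prove $g(H) \le 6$ for one connected $(\gamma_t,2\mu^*)$-graph $H$ with $\delta(H)=2$.

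Next I would apply Theorem~\ref{thm:MainDeg2} to such an $H$. If $H \in \mathcal{K} \cup \{C_6\}$, then $H$ contains either a triangle or a $6$-cycle, hence $g(H) \in \{3,6\}$ and we are done. Otherwise Theorem~\ref{thm:MainDeg2} guarantees that $\mathcal{M}=\mathcal{M}(H)$ is a maximal matching of $H$. Since $\delta(H)=2$ the graph $H$ contains at least one edge, so any maximal matching is nonempty; pick any edge $uv \in \mathcal{M}$. By the very definition of $\mathcal{M}$ stated just before Theorem~\ref{thm:MainDeg2}, there exist $x \in N(u)\cap d_2(H)$ and $y \in N(v)\cap d_2(H)$ such that $xu$, $uv$, $vy$ lie on a common induced $C_6$ of $H$. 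This induced $6$-cycle witnesses $g(H) \le 6$, finishing the proof.

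There is no real obstacle to work around here: the component reduction is routine, and the rest is essentially a direct reading of Theorem~\ref{thm:MainDeg2} together with the defining property of $\mathcal{M}$. The only point worth a sentence of justification is the nonemptiness of $\mathcal{M}$ when $H \notin \mathcal{K} \cup \{C_6\}$, which follows because a maximal matching of a graph with at least one edge must contain at least one edge.
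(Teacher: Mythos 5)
Your proof is correct and follows essentially the same route as the paper: reduce to a connected component, handle $\mathcal{K}\cup\{C_6\}$ directly, and otherwise use the fact that every edge of the nonempty maximal matching $\mathcal{M}$ lies on an induced $C_6$ by definition. The component reduction and the explicit nonemptiness remark are slightly more careful than the paper's one-line version, but the substance is identical.
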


\section{Proofs of the Main Results}
\label{sec:proof}
\subsection{Proof of Theorem \ref{thm:MainThm1}}
We first present some results which are simple to observe and useful for the proofs.
\begin{observation}\label{obs:tdssup}
Let $G$ be a graph without isolated vertices.
Every total dominating set of $G$ contains $sup(G)$.
\end{observation}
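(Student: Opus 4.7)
The plan is to argue directly from the definitions of a leaf, a support vertex, and a total dominating set, picking an arbitrary support vertex and showing it must lie in any total dominating set. Fix a total dominating set $S$ of $G$ and let $s\in sup(G)$ be arbitrary. By definition of $sup(G)$, there exists a leaf $\ell\in V(G)$ adjacent to $s$, and since $\ell$ has degree one, its unique neighbor is $s$, i.e., $N(\ell)=\{s\}$.

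The next step is to apply the total domination condition at $\ell$. Since $S$ is a total dominating set, every vertex of $G$, including $\ell$, must be adjacent to some vertex of $S$, meaning $N(\ell)\cap S\neq\emptyset$. Combining this with $N(\ell)=\{s\}$ forces $s\in S$. As $s$ was an arbitrary element of $sup(G)$, we conclude $sup(G)\subseteq S$, which is exactly the claim.

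There is essentially no obstacle here: the argument is a one-line consequence of the requirement that even leaves must be dominated from within $S$, and the observation that a leaf can be dominated only by its unique neighbor, namely the support vertex. The assumption that $G$ has no isolated vertices is only needed to ensure that the notion of a total dominating set is well-defined; it plays no further role, since the argument is entirely local at each leaf-support pair.
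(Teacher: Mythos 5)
Your proof is correct and is exactly the standard argument the paper has in mind (the paper states this as an observation without an explicit proof): a leaf's only neighbor is its support vertex, and total domination of the leaf forces that neighbor into the set. Nothing to add.
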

\begin{observation}\label{obs:maxmatchsup}
For every graph $G$ and every maximal matching $M$ of $G$, we have $sup(G)\subseteq V(M)$.
\end{observation}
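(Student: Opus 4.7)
The plan is to prove the containment $sup(G) \subseteq V(M)$ by picking an arbitrary support vertex and tracing through what maximality of $M$ forces at the pendant edge incident to it.

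First I would fix a support vertex $v \in sup(G)$ and, by the definition of support vertex, choose a leaf $\ell \in N(v)$. The key observation is structural: because $d(\ell) = 1$, the edge $v\ell$ is the unique edge of $G$ incident to $\ell$. This single fact is what makes the argument go through.

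Next I would split into two cases based on whether $v\ell$ belongs to $M$. If $v\ell \in M$, then $v \in V(M)$ trivially. If $v\ell \notin M$, then by maximality of $M$ the edge $v\ell$ cannot be added while preserving the matching property, so $v\ell$ must share an endpoint with some edge of $M$. That shared endpoint must be either $v$ or $\ell$; but the only edge of $G$ incident to $\ell$ is $v\ell$ itself, which we are assuming is not in $M$, so $\ell \notin V(M)$. Hence the shared endpoint has to be $v$, giving $v \in V(M)$. Since $v$ was arbitrary, $sup(G) \subseteq V(M)$.

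There is essentially no obstacle here; the statement is an immediate consequence of the definitions of support vertex, leaf, and maximal matching. The only care needed is to be explicit about the contrapositive form of maximality (every edge outside $M$ is incident to some edge of $M$) and to use the leaf's degree condition to rule out $\ell \in V(M)$ in the second case.
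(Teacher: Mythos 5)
Your proof is correct and complete; the paper states this as an Observation without proof, and your argument (a leaf's unique incident edge forces its support vertex into any maximal matching) is exactly the intended elementary justification.
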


\begin{lemma}\label{lem:MT->}
Let $G$ be a $(\gamma_t,2\mu^*)$-graph.
Then $G$ has a maximal matching $M=M^+ \cup M^- \cup M^*$ satisfying the conditions in Theorem \ref{thm:MainThm1}.
\end{lemma}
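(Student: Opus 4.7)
I will begin with an arbitrary minimum maximal matching $M$ of $G$. By Observation~\ref{obs:maxmatchtds} and the hypothesis $\gamma_t(G)=2\mu^*(G)$, the vertex set $V(M)$ is simultaneously the vertex set of a minimum maximal matching and a minimum total dominating set; by Observation~\ref{obs:maxmatchsup} we also have $sup(G)\subseteq V(M)$. I will partition $M=M^+\cup M^-\cup M^*$ according to how many endpoints of each edge lie in $sup(G)$, after first modifying $M$ locally so that it aligns with $S^+(G)$. The key technical tool throughout is the following \emph{minimality principle}: for every $v\in V(M)$, the set $V(M)\setminus\{v\}$ is not a total dominating set, so there is a vertex $x\in V(G)$ with $N(x)\cap V(M)=\{v\}$; using Observation~\ref{obs:maxmathind} one checks that $x=M_p(v)$ or $x\in V(G)\setminus V(M)$.

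\emph{Condition (i).} I would first argue that $M$ can be chosen so that $M^+$ perfectly matches $S^+(G)$. If $u\in S^+(G)$ is misaligned (that is, $M_p(u)\notin S^+(G)$), pick a support-neighbor $v\in S^+(G)$ of $u$ and analyze the local configuration at $u$, $v$, and their leaves: a swap that introduces the edge $uv$ and removes the $M$-edges at $u$ (and, in some cases, at $v$) produces a maximal matching that is either of strictly smaller size (contradicting the minimality of $|M|$) or of the same size with strictly fewer misalignments. Iteration yields the required $M$. \emph{Condition (ii).} Once $M^+$ is chosen as above, each $u\in S^-(G)\subseteq sup(G)\subseteq V(M)$ is matched in $M$, and its partner $M_p(u)$ cannot be a support vertex (else $u$ would belong to $S^+(G)$), so $uM_p(u)\in M^-$ joins $u\in S^-(G)$ with $M_p(u)\in V(G)\setminus sup(G)$, as required.

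\emph{Condition (iii).} Let $v\in S^-(G)\cup V(M^*)$ and suppose $v$ has a $V(M)$-neighbor $w\ne M_p(v)$. I would show that $S=V(M)\setminus\{M_p(v)\}$ is a total dominating set of size $2\mu^*(G)-1$: $v$ is dominated by $w\in S$, $M_p(v)$ is dominated by $v\in S$, every other matched vertex by its partner, and no external vertex can have $M_p(v)$ as its only dominator because condition~(ii) (for $v\in S^-(G)$) and the definition of $V(M^*)$ (for $v\in V(M^*)$) both force $M_p(v)\notin sup(G)$. This contradicts $\gamma_t(G)=2\mu^*(G)$. \emph{Condition (iv).} Let $u,v\in S^-(G)\cup V(M^*)$ be distinct with a common neighbor $x$. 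Condition~(iii) forces $x\notin V(M)$, because otherwise $x$ would be simultaneously $M_p(u)$ and $M_p(v)$. Assuming no vertex has neighborhood exactly $\{M_p(u),M_p(v)\}$, I would verify that
\[
S=\bigl(V(M)\setminus\{M_p(u),M_p(v)\}\bigr)\cup\{x\}
\]
is a total dominating set of size $2\mu^*(G)-1$. The only vertices whose domination could fail are those with $V(M)$-neighborhood contained in $\{M_p(u),M_p(v)\}$; singletons are excluded because $M_p(u),M_p(v)\notin sup(G)$, and the doubleton case is precisely the assumed-absent twin vertex. This again contradicts $\gamma_t(G)=2\mu^*(G)$.

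\emph{Main obstacle.} I expect condition~(i) to be the decisive step, since it is the only place where the matching $M$ itself must be chosen carefully rather than taken arbitrarily, and the swap argument must simultaneously preserve maximality, not increase the matching size, and strictly improve alignment with $S^+(G)$. Once (i) and hence (ii) are established, conditions (iii) and (iv) follow from the minimum-TDS substitutions sketched above, with the bookkeeping of private vertices of $u$ and $v$ in (iv) being the secondary delicate point.
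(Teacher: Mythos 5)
Your treatment of conditions (ii)--(iv) essentially matches the paper's proof: the substitutions $V(M)\setminus\{M_p(v)\}$ and $\bigl(V(M)\setminus\{M_p(u),M_p(v)\}\bigr)\cup\{x\}$ are exactly the sets the paper uses, and your bookkeeping for them is sound. The genuine gap is at condition (i), precisely the step you yourself flag as decisive. The swap argument you sketch is not carried out, and its details are a real obstacle rather than a routine verification: deleting the $M$-edges at $u$ (and possibly at $v$) and inserting $uv$ exposes $M_p(u)$ (and possibly $M_p(v)$), so the result need not be a maximal matching; restoring maximality may require adding further edges, which can restore the original size while creating new misaligned support vertices, so the claimed dichotomy ``strictly smaller size or strictly fewer misalignments'' is asserted rather than proved, and termination of the iteration is not established.

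The paper avoids all of this: no modification of $M$ is needed, because \emph{every} minimum maximal matching of a $(\gamma_t,2\mu^*)$-graph already satisfies condition (i), and in fact your own minimality principle delivers this. Let $u\in S^+(G)$ and suppose $M_p(u)\notin sup(G)$. Applying your principle to the vertex $M_p(u)$ gives an $x$ with $N(x)\cap V(M)=\{M_p(u)\}$ and either $x\notin V(M)$ or $x=M_p(M_p(u))=u$. In the first case $N(x)\subseteq V(M)$ by Observation \ref{obs:maxmathind}, so $x$ is a leaf and $M_p(u)\in sup(G)$, a contradiction; in the second case $N(u)\cap V(M)=\{M_p(u)\}$, which is impossible because $u\in S^+(G)$ has a support-vertex neighbor $w$, and $w\in sup(G)\subseteq V(M)$ by Observation \ref{obs:maxmatchsup} while $w\neq M_p(u)$ since $M_p(u)\notin sup(G)$. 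Hence $M_p(u)\in sup(G)$, and since $M_p(u)$ is adjacent to the support vertex $u$ we get $M_p(u)\in S^+(G)$; thus $M^+=\{uM_p(u):u\in S^+(G)\}$ is a perfect matching of the subgraph induced by $S^+(G)$, taken inside the original minimum maximal matching. With this direct argument replacing your swap scheme, the rest of your proof goes through and coincides with the paper's.
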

\begin{proof}
Let $G$ be a graph such that $\gamma_t(G)=2\mu^*(G)$.
Let $\mu^*(G)=k$ and $M$ be a maximal matching of size $k$.
Note that $\gamma_t(G)=2k$ and $V(M)$ is a minimum total dominating set.

Let $v$ be a vertex in $V(M)$ such that $M_p(v)$ is not a support vertex.
We will show that $M_p(v)$ is the unique neighbor of $v$ in $V(M)$.
Suppose that $v$ has a neighbor in $V(M)$ other than $M_p(v)$.
Since $V(M)$ is a minimum total dominating set there must exist a vertex $x$ such that $N(x)\cap V(M)=\{M_p(v)\}$.
Clearly $x$ is not equal to $v$ or any other vertex in $V(M)$, that is,
$x\notin V(M)$.
But then, we see that $N(x)\subseteq V(M)$ by Observation \ref{obs:maxmathind} and hence, $x$ is a leaf adjacent to $M_p(v)$.
However, $M_p(v)$ is not a support vertex, contradiction.
Consequently, for every $v\in V(M)$ satisfying $M_p(v)\notin sup(G)$ we have $N(v)\cap V(M)=\{M_p(v)\}$.

Now, let $v$ be a support vertex in $S^+(G)$.
By Observation \ref{obs:maxmatchsup} we have $v\in V(M)$ and $v$ is adjacent to a support vertex which is also in $V(M)$.
Therefore, by the argument above we obtain that $M_p(v)$ is a support vertex and hence, by definition of $S^+(G)$ we see that $M_p(v)\in S^+(G)$.
Consequently, we obtain that $v\in S^+(G)$ implies $M_p(v) \in S^+(G)$ and
$M^+=\{vM_p(v):v\in S^+(G) \}$ is a perfect matching of the subgraph of $G$ induced by $S^+(G)$ and contained in $M$.

Let $M^-=\{vM_p(v):v\in S^-(G)\}$ and note that $M^-$ is a subset of $M$.
Clearly, by definition of $S^-(G)$ we have $v\in S^-(G)$ implies $M_p(v)\notin sup(G)$.
Therefore, $M^+$ and $M^-$ satisfy the first two conditions.

Now let $M^*=M\backslash (M^+\cup M^-)$ which is the set of edges in $M$ whose neither of endpoints is a support vertex.
Let $v$ be a vertex in $S^-(G)\cup V(M^*)$.
Recall that $M_p(v)$ is not a support vertex and therefore,
the third condition is also satisfied.

Finally, let $u$ and $v$ be distinct vertices of $S^-(G)\cup V(M^*)$ sharing a common neighbor.
Let $w\in N(u)\cap N(v)$ and note that $w$ is not in $V(M)$.
Consider the set
$A=(V(M)\backslash \{M_p(u),M_p(v)\}) \cup \{w\}$.
As $\gamma_t(g)=2k$ and $|A|=2k-1$, there exists a vertex $x$ adjacent to no vertex in $A$.
Clearly $x$ is not a vertex in $V(M)$ and therefore,we get  $N(x)\subseteq V(M)$.
Since $V(M)$ is a total dominating set, we have $N(x)\cap V(M)\neq \emptyset$ which implies $N(x)\subseteq \{M_p(u),M_p(v)\}$.
As none of $M_p(u)$ and $M_p(v)$ is a support vertex, $x$ is not a leaf and consequently we obtain $N(x)=\{M_p(u),M_p(v)\}$ and thus,
last condition is also satisfied.
\end{proof}

\begin{lemma}\label{lem:MT<-}
Let $G$ be a graph with no isolated vertex.
If $G$ has a maximal matching $M=M^+ \cup M^- \cup M^*$ satisfying the conditions in Theorem \ref{thm:MainThm1},
then $G$ is a $(\gamma_t,2\mu^*)$-graph.
\end{lemma}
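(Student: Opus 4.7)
The plan is to establish the reverse inequality $\gamma_t(G) \geq 2|M|$, for then the chain $2|M| \leq \gamma_t(G) \leq 2\mu^*(G) \leq 2|M|$ (which uses Proposition \ref{prop:gammatdeg3} together with $|M| \geq \mu^*(G)$) collapses to force $\gamma_t(G) = 2\mu^*(G)$. Accordingly, I fix an arbitrary total dominating set $D$ of $G$ and work toward $|D| \geq 2|M|$.

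First I account for the support vertices. Conditions (i) and (ii) give the disjoint decomposition $V(M) = S^+(G) \cup S^-(G) \cup M_p(S^-(G)) \cup V(M^*)$ with $|S^+(G)| = 2|M^+|$ and $|S^-(G)| = |M_p(S^-(G))| = |M^-|$, so $|sup(G)| = 2|M^+| + |M^-|$. Observation \ref{obs:tdssup} gives $sup(G) \subseteq D$, hence it suffices to show $|D \cap R| \geq |M^-| + 2|M^*|$, where $R := V(G) \setminus sup(G) = M_p(S^-(G)) \cup V(M^*) \cup W$ and $W := V(G) \setminus V(M)$.

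Let $X := S^-(G) \cup V(M^*)$, so $|X| = |M^-| + 2|M^*|$, and note that $M_p$ restricts to a bijection $X \to M_p(S^-(G)) \cup V(M^*) = R \setminus W$. Define $V_c := \{x \in X : M_p(x) \in D\}$; under $M_p$ this set identifies bijectively with $D \cap (R \setminus W)$, giving $|D \cap (R \setminus W)| = |V_c|$. It therefore suffices to exhibit an injection from $X \setminus V_c$ into $D \cap W$. For each $x \in X \setminus V_c$, condition (iii) makes $M_p(x)$ the unique neighbor of $x$ in $V(M)$; since $M_p(x) \notin D$ yet $x$ must be dominated by $D$, one may pick some $w_x \in N(x) \cap W \cap D$.

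The main obstacle is verifying that the assignment $x \mapsto w_x$ is injective on $X \setminus V_c$, and this is precisely what condition (iv) delivers. If $w_x = w_y$ for distinct $x, y \in X \setminus V_c$, then this common vertex is a shared neighbor of $x$ and $y$, so condition (iv) produces a vertex $z$ with $N(z) = \{M_p(x), M_p(y)\}$. By the definition of $V_c$, neither $M_p(x)$ nor $M_p(y)$ is in $D$; hence $z$ has no neighbor in $D$, contradicting that $D$ is a total dominating set. Injectivity therefore holds, yielding $|D \cap W| \geq |X \setminus V_c|$ and $|D \cap R| \geq |V_c| + |X \setminus V_c| = |X|$, which completes the desired bound.
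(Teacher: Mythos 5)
Your proof is correct and follows essentially the same route as the paper's: the paper builds a single injection $f\colon V(M)\to T$ by sending $u$ to $M_p(u)$ when $M_p(u)\in T$ and otherwise to an external dominator of $u$, which is exactly your partition-based count of $D\cap sup(G)$, $D\cap(R\setminus W)$ and $D\cap W$ in injection form. In both arguments the key step is identical: condition (iii) forces the dominator of a vertex whose partner is missing to lie outside $V(M)$, and condition (iv) rules out two such vertices sharing that dominator.
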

\begin{proof}
Let $M=M^+ \cup M^- \cup M^*$ be a maximal matching fulfilling the conditions and $|M|=2k$.
Notice that it suffices to prove that $2k\leq \gamma_t(G)$,
since $\gamma_t(G)\leq 2\mu^*(G)\leq |M|=2k$.

Let $T$ be a total dominating set of $G$.
We will provide a one to one function $f:V(M)\rightarrow T$.
Let $U$ be the set of vertices $u$ in $V(M)$ such that $M_p(u)\notin T$.
By Observation \ref{obs:tdssup} and the first two conditions we have $U\subseteq S^-(G)\cup V(M^*)$.

Since $T$ is a total dominating set, for every vertex $u\in U$ there exists a vertex in $T$ adjacent to $u$ and set one of them as $f(u)$.
Note that by the third condition we have $f(u)\notin V(M)$.
We claim that $f(u)\neq f(v)$ for distinct vertices $u$ and $v$ in $U$.
Assume the contrary and let $u$ and $v$ be two distinct vertices of $U$ such that $f(u)=f(v)$.
Then we see that $u$ and $v$ have a common neighbor and hence, the fourth condition implies
the existence of a vertex $x$ adjacent to only $M_p(u)$ and $M_p(v)$.
On the other hand, by definition none of $M_p(u)$ and $M_p(v)$ is in $T$ and therefore,
there is no vertex in $T$ adjacent to $x$ which contradicts with $T$ being a total dominating set.

For every vertex $u\in V(M)\backslash U$ set $f(u)$ to be $M_p(u)$.
Then, it is easy to verify that $f(u)\neq f(v)$ for every distinct vertices $u$ and $v$ in $V(M)$, that is, $f:V(M)\rightarrow T$ is an injection and hence,
$|V(M)|=2k\leq |T|$ for any total dominating set $T$.
Consequently, we get $2k \leq \gamma_t(G)$ and thus, $\gamma_t(G)= 2k$.
\end{proof}
Combining the results of Lemmas \ref{lem:MT->} and \ref{lem:MT<-} gives Theorem \ref{thm:MainThm1}.

\subsection{Proof of Theorem \ref{thm:MainDeg2}}
It is clear that if $G\in \mathcal{K}\cup\{Cc_6\}$, then $\gamma_t(G)=2\mu^*(G)$ and $\delta(G)=2$.
By Corollary \ref{cor:maindeg2}, we see that if $\mathcal{M}$ is a maximal matching satisfying the conditions in Theorem \ref{thm:MainDeg2}, then $G$ is a $(\gamma_t,2\mu^*)$-graph.
Therefore, to prove Theorem \ref{thm:MainDeg2} the following result is sufficient by Corollary \ref{cor:maindeg2}.

\begin{lemma}
Let $G$ be a connected $(\gamma_t,2\mu^*)$-graph with $\delta(G)=2$ and $G\notin \mathcal{K}\cup \{C_6\}$.
Then $\mathcal{M}$ is the unique maximal matching satisfying the conditions (i) and (ii) in Corollary \ref{cor:maindeg2}.
\end{lemma}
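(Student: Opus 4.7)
The plan is to fix an arbitrary maximal matching $M$ of $G$ satisfying conditions (i) and (ii) of Corollary \ref{cor:maindeg2}---which exists since $G$ is a $(\gamma_t,2\mu^*)$-graph---and show $M=\mathcal{M}$; the uniqueness claim will then follow since $M$ was arbitrary. I will prove the two containments $M\subseteq\mathcal{M}$ and $\mathcal{M}\subseteq M$ separately, using condition (ii) to propagate degree-two witness vertices and the exclusion $G\notin\mathcal{K}\cup\{C_6\}$ to rule out degenerate configurations.

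For $M\subseteq\mathcal{M}$, I fix $uv\in M$ and aim to construct an induced $C_6$ through $uv$ whose two opposite corners lie in $d_2(G)$. Using connectedness and $G\notin\mathcal{K}$, I first exhibit some $x\in V(G)\setminus V(M)$ adjacent to $u$ but not to $v$: if no such $x$ existed, one checks that every neighbor of $u$ (and of $v$) outside $V(M)$ is a degree-two common neighbor of $u$ and $v$, so the component containing $uv$ collapses to a graph in $\mathcal{K}$, contradicting the hypothesis. Choosing any $u'\in N(x)\setminus\{u\}$---automatically in $V(M)$ by Observation \ref{obs:maxmathind} and distinct from $v$ because $v\notin N(x)$---two applications of (ii) supply degree-two witnesses: first to the pair $(u,u')$ sharing $x$, yielding $y^*$ with $N(y^*)=\{v,M_p(u')\}$; then to $(v,M_p(u'))$ sharing $y^*$, yielding $x^*$ with $N(x^*)=\{u,u'\}$. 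Condition (i) together with the degree-two constraints rules out every possible chord, so $\{x^*,u,v,y^*,M_p(u'),u'\}$ induces a $C_6$ with $x^*,y^*$ opposite; its two non-incident edges, $uv$ and $u'M_p(u')$, both lie in $M$, and in particular $uv\in\mathcal{M}$.

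For $\mathcal{M}\subseteq M$, I take any induced $C_6$ $x-p-r-y-s-q-x$ with $x,y\in d_2(G)$ opposite and argue its contributed edges $pr,sq$ belong to $M$. In Case A ($x,y\notin V(M)$), all of $p,q,r,s$ lie in $V(M)$ and condition (i) at $p$ and $q$ immediately forces $r=M_p(p)$ and $s=M_p(q)$, so $pr,sq\in M$. In Case B ($x\in V(M)$; the case $y\in V(M)$ is symmetric), chasing (i) yields $x,y\in V(M)$ with $M_p(x)=p$, $M_p(y)=s$, and $r,q\notin V(M)$. The main obstacle is ruling out Case B, which I will accomplish by iterated applications of (ii) to force $d(p)=d(q)=d(r)=d(s)=2$. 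The representative step: if $d(p)\geq 3$, then $p$ has a neighbor $w\in V(G)\setminus V(M)$ with $w\neq r$; either $w=q$, in which case (ii) on $(p,x)$ sharing $q$ requires a vertex with neighborhood exactly $\{x,p\}$, contradicting $s\in N(q)$; or $w\neq q$, in which case (ii) on $(p,u'')$ for any $u''\in N(w)\cap V(M)$ forces the required witness to equal $q$ and hence $M_p(u'')=s$, i.e.\ $u''=y$, impossible since $w$ is neither $r$ nor $s$ (the only neighbors of $y$). Symmetric arguments handle $d(s)$, $d(r)$, $d(q)$. With all six corners of the $C_6$ of degree two, the induced subgraph is closed under neighborhoods in $G$, so by connectedness $G=C_6$, contradicting $G\notin\mathcal{K}\cup\{C_6\}$. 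Thus Case B never occurs, $\mathcal{M}\subseteq M$, and hence $\mathcal{M}=M$.
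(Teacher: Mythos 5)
Your proof is correct and follows essentially the same route as the paper: both containments are established by the same mechanism, with $M\subseteq\mathcal{M}$ obtained from two applications of condition (ii) to build an induced $C_6$ through each matching edge, and $\mathcal{M}\subseteq M$ obtained by locating $V(M)$ on the hexagon and using (ii) to force degree-two corners. The only cosmetic difference is in the second containment's endgame: you show all six hexagon vertices have degree two and conclude $G=C_6$, whereas the paper stops at four degree-two vertices and derives the contradiction from an external neighbor of one of the remaining two — the underlying computation is the same.
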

\begin{proof}
Let $M$ be a maximal matching in $G$ satisfying the conditions (i) and (ii) in Corollary \ref{cor:maindeg2}.

We first show that $M\subseteq \mathcal{M}$.
Let $uv$ be an edge in $M$.
Then, since $G$ is connected and $G\notin \mathcal{K}$,
there exists a vertex $z\in (N(u)\cup N(v))\backslash \{u,v\}$ such that $N(x)\neq \{u,v\}$.
As the degree of $z$ is at least two,
$z$ has a neighbor $w$ which does not belong to $\{u,v\}$.
Note that $z$ is adjacent to at least one of $u$ and $v$.
Without loss of generality, suppose that $v$ is a neighbor of $z$.
Note also that $z\notin V(M)$ and $w\in V(M)$ by condition (i) and Observation \ref{obs:maxmathind}.
Therefore, $v$ and $w$ are distinct vertices in $V(M)$ both adjacent to $z$ and hence, there exist vertices $x$ and $y$ such that
$N(x)=\{M_p(v),M_p(w)\}=\{u,M_p(w)\}$ and $N(y)=\{v,w\}$.
Then, the subgraph of $G$ induced by $\{x,y,u,v,w,M_p(w)\}$ is a $C_6$ containing the edges $xu,uv,vy$ and thus, we obtain $uv\in \mathcal{M}$ which yields $M\subseteq \mathcal{M}$.

We next prove that $\mathcal{M}\subseteq M$ by contradiction.
Suppose that an edge $uv$ belongs to $\mathcal{M}\backslash M$.
Since $uv\in \mathcal{M}$, there exist vertices $w,t$ and $x,y\in d_2(G)$ such that subgraph of $G$ induced by $\{x,u,v,y,w,t\}$ has the edge set $\{xu,uv,vy,yw,wt,tx\}$.
As $uv\notin M$, condition (i) and Observation \ref{obs:maxmathind} imply that exactly one of $u$ and $v$ belongs to $V(M)$.
Without loss of generality, let $u\in V(M)$.
Then, $v\notin V(M)$ and hence, $y\in V(M)$ by condition (i) and Observation \ref{obs:maxmathind}.
Therefore, $u$ and $y$ are distinct vertices of $V(M)$ and hence there exists a vertex $z$ whose neighborhood is $\{u,y\}$.
Since $y$ has degree two, $z$ is either $v$ or $w$.
However, $u$ and $w$ are not adjacent and thus, we get $z=v$ and $v\in d_2(G)$.
Since $y\in d_2(G)\cap V(M)$ and $v\notin V(M)$, we get $w\in V(M)$ and $M_p(y)=w$.
As both $w$ and $y$ are in $V(M)$, we see that $t\notin V(M)$ and hence, we also get $x\in V(M)$ and $M_p(u)=x$ by condition (i).
Thus, $x$ and $w$ are distinct vertices in $V(M)$ and therefore, there exists a vertex $z$ with $N(z)=\{x,w\}$.
As $x$ is of degree two and $u$ is not a neighbor of $w$, we obtain $z=t$ and $t\in d_2(G)$.
Now note that on this cycle $C_6$ the vertices $x,y,v,t$ are of degree two.
Since $G$ is connected and $G\neq C_6$,
at least one of $u$ and $w$ has a neighbor $s$ not on this cycle.
Without loss of generality, let $s$ be adjacent to $w$.
As $w\in V(M)$ and $s\neq y$, we have $s\notin V(M)$.
Moreover, since the degree of $s$ is at least two, condition (i) implies that
$s$ is adjacent to a vertex $r$ in $V(M)\backslash \{w\}$.
Then, $w$ and $r$ share a neighbor and thus, by condition (ii) we see that there exists a vertex $q$ such that $N(q)=\{M_p(w),M_p(r)\}=\{y,M_p(r)\}$ and hence, $y$ is adjacent to $q$.
As $y\in d_2(G)$, $q$ must be the vertex $v$ and therefore, we get
$M_p(r)=u$ which gives $r=x$.
But then, $x$ is adjacent to three distinct vertices $u,t$ and $s$ which contradicts with $x\in d_2(G)$.
Consequently, we obtain $\mathcal{M}\subseteq M$ and the result follows.
\end{proof}

\section{Discussion and Conclusions}
\label{sec:dis}
In this paper, we study graphs $G$ for which the total domination number $\gamma_t(G)$ attains its upper bound $2\mu^*(G)$, that is $(\gamma_t,2\mu^*)$-graphs.
We provide not only a constructive characterization of  $(\gamma_t,2\mu^*)$-graphs but also
a polynomial time procedure to determine whether a given graph $G$ with $\delta(G)= 2$ is a $(\gamma_t,2\mu^*)$-graph.

Producing a polynomial time algorithm to determine $(\gamma_t,2\mu^*)$-graphs with at least one leaf is a topic of ongoing research.
Another potential research direction is to find a necessary and sufficient condition for the graphs $G$ satisfying $\delta(G)=\delta$ and $\gamma_t(G)=2\mu^*(G)-\delta+2$ for some or all positive integers $\delta\geq 3$.

\section*{Acknowledgments}
This work is supported by the Scientific and Technological Research
Council of Turkey (TUBITAK) under grant no. 118E799.

\end{document}